\newcommand{\F}{\mathbb{F}}
\newcommand{\Z}{\mathbb{Z}}
\newcommand{\OOC}{\mathrm{OOC}}
\newcommand{\CW}{\mathrm{CW{\text -}CPC}}
\newcommand{\code}{{\mathcal{C}}}
\newcommand{\bie}{{\mathcal{B}}}
\newcommand{\design}{{\mathcal{D}}}
\newtheorem{theorem}{Theorem}[section]
\newtheorem{lemma}[theorem]{Lemma}
\newtheorem{corollary}[theorem]{Corollary}
\theoremstyle{definition}
\newtheorem{example}[theorem]{Example}
\newtheorem{definition}[theorem]{Definition}
\newtheorem{remark}[theorem]{Remark}
\newtheorem{proposition}[theorem]{Proposition}
\newtheorem{note}[theorem]{Note}
\title{Optical orthogonal codes from a combinatorial perspective}
\author{Sophie Huczynska\footnote{School of Mathematics and Statistics, University of St Andrews, St Andrews, Fife, KY16 9SS,UK, sh70\@st-andrews.ac.uk} \/ and \/  Siaw-Lynn Ng \footnote{Information Security Group, Royal Holloway, University of London, Egham, Surrey, TW20 0EX, UK, S.Ng\@rhul.ac.uk}}
\begin{document}
\maketitle

\begin{abstract}
Optical orthogonal codes (OOCs) are sets of $(0,1)$-sequences with good auto- and cross- correlation properties.  They were 
originally introduced for use in multi-access communication, particularly in the setting of optical CDMA communications systems.  They can also be formulated in terms of families of subsets of $\Z_v$, where the correlation properties can be expressed in terms of conditions on the internal and external differences within and between the subsets.  With this link there have been many studies on their combinatorial properties.  
However, in most of these studies it is assumed that the auto- and cross-correlation values are equal; in particular, many constructions focus on the case where both correlation values are $1$.  This is not a requirement of the original communications application. 
In this paper, we ``decouple" the two correlation values and consider the situation with correlation values greater than $1$. We consider the bounds on each of the correlation values, and the structural implications of meeting these separately, as well as associated links with other combinatorial objects.  We survey definitions, properties and constructions, establish some new connections and concepts, and discuss open questions.

\textbf{Keywords:}   Optical orthogonal codes, internal and external differences, auto-correlation, cross-correlation.
\end{abstract}


\section{Introduction}
\label{sec:introduction}

Optical orthogonal codes (OOCs) were introduced for use in multi-access communication, particularly in the setting of optical CDMA communications systems  (\cite{BriWei,ChuSalWei}).  Since then, they have received considerable attention, with extensions to two and three dimensions (\cite{FengWangWangZhao, FengChang, Alderson}). 

In its original form, an OOC comprises a family of $(0,1)$-sequences with good auto- and cross- correlation properties.  The sequence definition can be reformulated in terms of families of subsets of $\Z_v$, where the $0$ and $1$ entries in the sequence correspond to the absence or presence of an element in a set.  The correlation properties can then be expressed in terms of bounds on the differences within and between the subsets in the family.

This set-theoretic formulation brings OOCs within the framework of various combinatorial constructions and approaches, with a particular relationship to generalisations of difference families (\cite{NgPat}).  However, various aspects of the motivating problem ensure that the families of sets corresponding to OOCs have a very different flavour to most commonly-studied combinatorial situations involving internal and external differences.  Unlike external difference families (EDFs), OOCs have no requirement that the sets are disjoint.  Moreover, unlike classic difference families (DFs) and EDFs, the bounds refer to the number of internal differences within any single set or external differences between any pair of distinct sets, rather than to the multiset union of all internal or external differences.

In many studies it is assumed that the auto- and cross- correlation values are equal.  In particular, many constructions focus on the case where both correlation values are 1.  Here we aim to “decouple” the two correlation values and study the relationship between them.  This is mathematically interesting, but also has justification on practical grounds: in \cite{ChuSalWei, YangFuja} it is explained that the auto-correlation constraints for OOCs relate to the issue of synchronisation, while cross-correlation constrains chiefly relate to operation. Moreover, OOCs have strong connections to other combinatorial objects for which only one of the correlation values is important (for example, to constant weight cyclically permutable codes, where only the cross-correlation value is taken into account when considering the minimum distance of the code). 

In Section \ref{sec:definitions} we introduce definitions and fundamental properties of OOCs as sequences and subsets.  We clarify boundary cases, and consider the question of isomorphisms and equivalences of OOCs.
In Section \ref{sec:boundsandconstructions} we briefly survey some bounds and constructions in the literature that focus on OOCs with different auto- and cross-correlation values.  Following this, in Section \ref{sec:others} examine the relationship of OOCs with other combinatorial objects, again focusing on the roles of the auto- and cross-correlation values in the parameters of these objects.  In Section \ref{sec:bounds}
we treat the bounds on auto- and cross-correlation values separately and consider the consequences of these bounds. 
Finally, in Section \ref{sec:conclusion} we indicate some open questions. 

\section{Definitions and fundamental properties}
\label{sec:definitions}

Here we introduce the definitions of optical orthogonal codes (OOCs) as $(0,1)$-sequences and as subsets of $\Z_v$, the integers modulo $v$,  and examine some fundamental properties.

\subsection{OOCs as sequences} \label{sub:OOCsequences}

\begin{definition}[OOCs as sequences]
\label{defn:OOCsequences}
Let $v, w, \lambda_a, \lambda_c$ be non-negative integers with $v \ge 2$, $w \ge 1$.  A $(v,w,\lambda_a, \lambda_c)$-OOC $\code$ of size $N \ge 1$ is a family $\{
X_0, \ldots, X_{N-1}\}$ of $N$ $(0,1)$-sequences of length $v$, weight
$w$, such that auto-correlation values are at most $\lambda_a$ and
cross-correlation values are at most $\lambda_c$.

More formally, writing $X=(x_i)_{i=0}^{v-1}$, $Y =
(y_i)_{i=0}^{v-1}$, $x_i, y_i \in \{0, 1\}$, we require (with indices
written modulo $v$):
\begin{enumerate}
\item[(A)] $\displaystyle{\sum_{t=0}^{v-1}} x_t x_{t+\delta} \le \lambda_a \mbox{ for any } X \in \code, 0 < \delta \le v-1$, 
\item[(B)] $\displaystyle{\sum_{t=0}^{v-1}} x_t y_{t+\delta} \le \lambda_c \mbox{ for any } X, Y \in \code, 0 \le \delta \le v-1$.
\end{enumerate}
\end{definition}

If $X=(x_i)_{i=0}^{v-1}$, we will say $X+s = (x_{i+s})_{i=0}^{v-1}$ is a (cyclic) shift of $X$ by $s$ places.  We may write $X=(x_i)$, omitting the range if there is no ambiguity.
We examine first the boundary cases of $\lambda_a=0$, $\lambda_c=0$, $N=1$, and $w=1$. 
\begin{remark}[The case $N=1$]
\label{rem:N=1}
Clearly there is no cross-correlation if there is only one sequence.  In this case we will set $\lambda_c=0$. If there is more than one sequence then $\lambda_c$
cannot be 0.
$\qedsymbol$
\end{remark}

For the case $\lambda_a = 0$, it follows immediately from the definition that:

\begin{lemma}
\label{lem:lambda_a}
A $(v,w,\lambda_a, \lambda_c)$-OOC has $\lambda_a=0$ if and only if $w=1$, that is, all the sequences have exactly one 1 and $v-1$ 0s. 
\end{lemma}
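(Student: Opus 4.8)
The plan is to prove both directions of the equivalence by direct computation from the auto-correlation condition (A), using the fact that a weight-$w$ sequence has exactly $w$ entries equal to $1$.

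First I would prove the easy direction: if $w=1$, then each sequence $X=(x_i)$ has exactly one index $j$ with $x_j=1$ and all other entries zero. For any $\delta$ with $0<\delta\le v-1$, the product $x_t x_{t+\delta}$ is nonzero only if both $x_t=1$ and $x_{t+\delta}=1$; since there is only one nonzero entry, this forces $t\equiv j$ and $t+\delta\equiv j\pmod v$, hence $\delta\equiv 0$, contradicting $0<\delta\le v-1$. Thus $\sum_{t=0}^{v-1} x_t x_{t+\delta}=0$ for every such $\delta$, so $\lambda_a=0$. (Here I am implicitly using that $\lambda_a$, being the maximum auto-correlation value, equals $0$ precisely when all these sums vanish.)

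For the converse, suppose $w\ge 2$; I would exhibit a shift $\delta$ for which the auto-correlation sum is at least $1$, forcing $\lambda_a\ge 1$. Fix any $X\in\code$ and let $j_1<j_2$ be two distinct indices with $x_{j_1}=x_{j_2}=1$ (these exist since $w\ge 2$). Set $\delta=j_2-j_1$, so $0<\delta\le v-1$. Then the term $t=j_1$ contributes $x_{j_1}x_{j_1+\delta}=x_{j_1}x_{j_2}=1$ to $\sum_{t=0}^{v-1}x_t x_{t+\delta}$, and since all terms are non-negative, the sum is at least $1$. Hence $\lambda_a\ge 1>0$. Combining with the first direction gives the stated equivalence. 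One should also note the degenerate possibility $w=0$ is excluded by the standing hypothesis $w\ge 1$ in Definition~\ref{defn:OOCsequences}, so the two cases $w=1$ and $w\ge 2$ are exhaustive.

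I do not anticipate a genuine obstacle here; the only subtlety worth stating carefully is the interpretation of "$\lambda_a=0$'' — namely that the OOC parameter $\lambda_a$ is the least upper bound on the auto-correlation values, so $\lambda_a=0$ is equivalent to all auto-correlation sums being zero, which is what the two computations above address. A sentence making this explicit (perhaps referencing Remark~\ref{rem:N=1} for the analogous convention on $\lambda_c$) would make the argument airtight.
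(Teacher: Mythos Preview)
Your proposal is correct and complete. The paper itself gives no proof for this lemma, stating only that ``it follows immediately from the definition''; your argument is precisely the direct unpacking of that claim, so there is nothing to compare against beyond noting that you have filled in the omitted details faithfully.
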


The relationship between $w=1$ and $\lambda_c$ is only slightly more complicated.

\begin{lemma}
\label{lem:lambda_c}
For a $(v,1,\lambda_a, \lambda_c)$-OOC of size $N$,
\[
\lambda_c= \left\{ \begin{array}{ll} 0 & \mbox{ if } N=1, \\
                             1 & \mbox{ if } N>1.
                             \end{array} \right.
\]
\end{lemma}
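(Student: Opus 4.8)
The plan is to reduce everything to the observation that a weight-one $(0,1)$-sequence of length $v$ is simply the indicator of a single point of $\Z_v$, so the cross-correlation between two such sequences can only take the values $0$ or $1$. Concretely, since $w=1$ each $X_j \in \code$ has exactly one entry equal to $1$; write $a_j \in \Z_v$ for its position, and note that the $a_j$ are pairwise distinct because the $X_j$ are distinct sequences. The computation I would record first is that, for sequences $X, Y$ whose $1$ lies in positions $a, b$ respectively and for any shift $0 \le \delta \le v-1$,
\[
\sum_{t=0}^{v-1} x_t y_{t+\delta} = \begin{cases} 1 & \text{if } \delta \equiv b-a \pmod v,\\ 0 & \text{otherwise,}\end{cases}
\]
since $x_t y_{t+\delta}=1$ forces $t=a$ and $t+\delta = b$. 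In particular every cross-correlation value appearing in condition (B) of Definition \ref{defn:OOCsequences} lies in $\{0,1\}$.

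From here the two cases are immediate. If $N=1$ then, by the convention of Remark \ref{rem:N=1}, $\lambda_c = 0$, there being no pair of distinct sequences to correlate; this is consistent with condition (A), since by Lemma \ref{lem:lambda_a} the hypothesis $w=1$ also forces $\lambda_a=0$. If $N>1$, then on the one hand the displayed identity shows every cross-correlation value is at most $1$, so $\lambda_c \le 1$; on the other hand, fixing any $i \ne j$ we have $a_i \ne a_j$, and taking $\delta$ equal to the residue of $a_j - a_i$ in $\{1,\ldots,v-1\}$ makes the displayed sum for $X_i, X_j$ equal to $1$, so $\lambda_c \ge 1$. Hence $\lambda_c = 1$.

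I do not expect any genuine obstacle here: the argument is just an unwinding of the definitions. The only points needing care are the boundary convention for $N=1$ (handled by Remark \ref{rem:N=1}), and the fact that for $N>1$ the value $1$ is genuinely attained rather than being a mere upper bound, which is exactly what the lower-bound step provides; this attainment is also why the statement records an equality rather than an inequality for $\lambda_c$.
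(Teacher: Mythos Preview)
Your proposal is correct and is precisely the natural unwinding of the definitions; the paper itself gives no explicit proof for this lemma (it is stated immediately after the remark that ``it follows immediately from the definition'' for Lemma~\ref{lem:lambda_a}, and is treated in the same spirit). Your computation of the cross-correlation for weight-one sequences and the two-case split via Remark~\ref{rem:N=1} is exactly the argument the paper leaves implicit.
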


The converse is not true: for example, in the case $N>1$ the OOC consisting of two sequences 1100 and 1010 has $\lambda_c=1$ but $w=2$, and in the case $N=1$, the OOC has one sequence which could have any number of 1s and hence any $w \le v$.

\begin{definition}[Non-trivial OOCs]
\label{defn:nontrivial}
We say a $(v, w, \lambda_a, \lambda_c)$-OOC $\code$ is \emph{trivial} if $\lambda_a = 0$ or $\lambda_c=0$.  We say $\code$ is \emph{non-trivial} if $\lambda_a \ge 1$ and $\lambda_c \ge 1$.
\end{definition}

It follows that a non-trivial OOC has $w > 1$ and $N >1$.

\begin{remark}[On correlation]
As an aside we note that ``correlation'' sometimes means different things in
different contexts.  In some types of sequences, such as frequency hopping
sequences, ``correlation'' means Hamming correlation, where we get a contribution of 
1 if and only if both sequences have the {\em same symbols} at the
same place.  In OOCs, we get a contribution of 1 to the correlation value if and only if both sequences have a 1 at the same place.   We will say that the sequence $X$ \emph{collides with} the sequence 
$Y$ in position $h$ if both $X$ and $Y$ has a 1 in that position, or that $X$ and $Y$ \emph{have a collision} at that position.
$\qedsymbol$
\end{remark}

By Definition \ref{defn:OOCsequences}, a $(v,w,\lambda_a, \lambda_c)$-OOC is also a
$(v,w,\lambda_a', \lambda_c')$-OOC for $\lambda_a' \ge \lambda_a$,
$\lambda_c' \ge \lambda_c$.  We shall introduce the following terminology for when
$\lambda_a$ or $\lambda_c$ is attained:

\begin{definition}[Proper OOCs] \label{defn:proper}
We say that a $(v,w,\lambda_a, \lambda_c)$-OOC is an \emph{a-proper}
 $(v,w,\lambda_a, \lambda_c)$-OOC if $\lambda_a$ is attained by some
sequence in the OOC.  We say that it is a \emph{c-proper}
 $(v,w,\lambda_a, \lambda_c)$-OOC if $\lambda_c$ is attained by some
pair of sequences in the OOC.  (We allow an OOC with $N=1$ to be c-proper.)  We say that an OOC is \emph{proper}
if it is both a-proper and c-proper.
\end{definition}

Next we define ``full cyclic order'': 

\begin{definition}[Full cyclic order] \label{defn:fullcyclic}
We say that a $(0,1)$-sequence of length $v$, $X=(x_i)_{i=0}^{v-1}$, has
full cyclic order if $(x_i)_{i=0}^{v-1} = (x_{i+\delta})_{i=0}^{v-1}$ 
happens only if $v | \delta$.
\end{definition}

\begin{definition}[Non-degenerate OOCs] \label{defn:nondegenerate}
We say that a  $(v,w,\lambda_a, \lambda_c)$-OOC is \emph{non-degenerate}
if all its sequences have full cyclic order and no sequence
is a shift of another.
\end{definition}

It is not hard to see that a single-sequence OOC can be non-degenerate, and that if $w=1$ then a non-degenerate OOC can have only one sequence.

Non-degeneracy is a concept that comes into play particularly when we consider 
equivalence - when is an OOC ``the same as'' another OOC?  We will discuss
this further in Section \ref{sub:equivalence}.  Non-degeneracy also implies 
the following:

\begin{theorem} \label{thm:<w}
For a non-degenerate proper $(v, w, \lambda_a, \lambda_c)$-OOC, 
$\lambda_a < w$, $\lambda_c < w$.
\end{theorem}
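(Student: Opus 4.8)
The plan is to argue both inequalities directly, in each case showing that equality with $w$ would force a violation of non-degeneracy. The starting point is the free upper bound $w$: for any weight-$w$ $(0,1)$-sequences $X=(x_i)$, $Y=(y_i)$ and any shift $\delta$ one has $\sum_t x_t y_{t+\delta}\le \sum_t x_t = w$, because termwise $x_t y_{t+\delta}\le x_t$; taking $Y=X$ gives the same bound for auto-correlation. Hence $\lambda_a\le w$ and $\lambda_c\le w$ hold automatically once the OOC is $a$-proper, respectively $c$-proper, and the whole content is to exclude equality. Throughout I will identify a sequence with its support, so ``$X$ and $Y$ collide in position $h$'' means $h$ lies in both supports, the auto-correlation of $X$ at shift $\delta$ is $|\mathrm{supp}(X)\cap(\mathrm{supp}(X)-\delta)|$, and similarly for cross-correlation.

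For $\lambda_a<w$: since the OOC is $a$-proper, pick $X\in\code$ and $\delta$ with $0<\delta\le v-1$ attaining $\sum_t x_t x_{t+\delta}=\lambda_a$. If this equals $w$, then $\sum_t x_t(1-x_{t+\delta})=0$, a sum of non-negative terms, so $x_{t+\delta}=1$ whenever $x_t=1$. Writing $S=\mathrm{supp}(X)$ (with $|S|=w$), this says $S+\delta\subseteq S$, hence $S+\delta=S$ since both have size $w$. Equivalently $(x_{i+\delta})_i=(x_i)_i$, so by the full cyclic order condition (Definition~\ref{defn:fullcyclic}), which holds because the OOC is non-degenerate, $v\mid\delta$ — contradicting $0<\delta\le v-1$. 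Hence $\lambda_a<w$.

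For $\lambda_c<w$: if $N=1$ then $\lambda_c=0<w$ by Remark~\ref{rem:N=1} (using $w\ge 1$), so assume $N\ge 2$. Since the OOC is $c$-proper, pick distinct $X,Y\in\code$ and $\delta$ with $\sum_t x_t y_{t+\delta}=\lambda_c$. If this equals $w$, the same manipulation gives $y_{t+\delta}=1$ whenever $x_t=1$, so $\mathrm{supp}(X)+\delta\subseteq\mathrm{supp}(Y)$ and, both sides having size $w$, $\mathrm{supp}(X)+\delta=\mathrm{supp}(Y)$. Reading this off position by position, $y_i=x_{i-\delta}$ for all $i$, i.e. $Y=X-\delta$ is a cyclic shift of $X$; since $X\ne Y$ this contradicts the requirement in Definition~\ref{defn:nondegenerate} that no sequence is a shift of another. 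Hence $\lambda_c<w$.

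The argument is short, and the only points that need care are bookkeeping rather than genuine obstacles: keeping the direction of the shift straight in the convention $X+s=(x_{i+s})$, checking that equality in $\sum_t x_t y_{t+\delta}\le w$ forces the containment $\mathrm{supp}(X)+\delta\subseteq\mathrm{supp}(Y)$ in that direction and not the reverse, and isolating the $N=1$ case for $\lambda_c$. It is worth noting that both non-degeneracy hypotheses are genuinely used: without full cyclic order the sequence $1010$ in $\Z_4$ has auto-correlation $w$ at shift $2$, and without the ``no shift of another'' condition a sequence and any shift of it realise cross-correlation $w$, so neither conclusion survives dropping the corresponding hypothesis.
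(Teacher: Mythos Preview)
Your proof is correct and follows essentially the same approach as the paper's: both argue that if the correlation value reached $w$ then all $w$ ones would have to line up, forcing one sequence to equal a shift of another (or of itself), contradicting non-degeneracy. Your version is simply more explicit about the bookkeeping --- the termwise inequality $x_t y_{t+\delta}\le x_t$, the support containment, the shift direction, and the $N=1$ case --- whereas the paper compresses all of this into a couple of sentences and leaves the $\lambda_c$ case as ``similar''.
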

\begin{proof}
By definition $\lambda_a \le w$.  Suppose $\lambda_a = w$.  Then there is a sequence $X=(x_i)$ in the OOC with 
$\Sigma x_t x_{t+\delta} = w$ for some $0 < \delta \le v-1$.  Since
there are only $w$ 1s in $(x_i)$ that means all the 1s of $(x_i)$ and
$(x_{i+\delta})$ are in the same positions, so $(x_i)= (x_{i+\delta})$ with $v
\nmid \delta$, contradicting the assumption that $(x_i)$ has full cyclic
order. The $\lambda_c$ case is similar.
\end{proof}

\subsection{OOCs as subsets} \label{sub:OOCsubsets}

OOCs may be written in terms of subsets of $\Z_v$.  This correspondence occurs frequently in the literature.

Let $X=(x_i)$ be a binary sequence of length $v$.  Let $Q$ be the set of integers modulo $v$ denoting the positions of the non-zero bits of $X$. We write $Q+\delta = \{q+\delta \pmod{v} : q \in Q\}$.  Consider $x_t x_{t+\delta}$, $\delta \in \Z_v \setminus \{0\}$. We have $x_t x_{t+\delta}=1$ if and only if $x_t = x_{t+\delta} =1$, that is, $t \in Q$, $t+\delta \in Q$, and so $t \in Q \cap (Q-\delta)$.  This allows us to write OOCs as subsets of $\Z_v$ as follows:

Let $\code=\{ X_0,\ldots, X_{N-1}\}$ be a $(v,w,\lambda_a, \lambda_c)$-OOC.
For each sequence
$X_i$, let $Q_i$ be the set of integers modulo $v$ denoting the
positions of the non-zero bits.  Then $Q_i \subseteq \Z_v$,
$|Q_i| = w$ for all $i \in \{0, \ldots N-1\}$, and
\begin{enumerate}
\item[(A$'$)] $| Q_i \cap (Q_i + \delta) | \le \lambda_a$ for all $\delta \in
  \Z_v\setminus \{0\}$.  
\item[(B$'$)] $| Q_i \cap (Q_j + \delta) | \le \lambda_c$ for all $\delta \in
  \Z_v$. 
\end{enumerate}

We give some examples of this correspondence between sets of subsets and sets of sequences:
\begin{example} \label{eg:OOC-subset}
\begin{enumerate}  
\item (\cite{ChuSalWei}) The $(13, 3, 1, 1)$-OOC, $\code = \{(1100100000000), (1010000100000)\}$ has corresponding subsets $\bie = \{ \{0, 1, 4\}, \{0, 2, 7\} \}$ over $\Z_{13}$.
\item (\cite{FujiharaMiao}) The $(15,3,1,1)$-OOC, $\code = \{(110010000000000), (100000010100000)$ has corresponding subsets $\bie = \{ \{0, 1, 4\}, \{0, 7, 9\} \}$ over $\Z_{15}$.
\end{enumerate}
\end{example}

Clearly the notions of properness and non-degeneracy of OOCs carry over to this view of OOCs as subsets of $\Z_v$ in a straightforward manner and we will not repeat them here. Viewing OOCs as subsets of $\Z_v$ also allows us to link them to some aspects of the more classical difference sets and difference families.  

We introduce here some notation that we will also use in later sections.  Let $G$ be a group of size $v$ written additively. Let $Q \subseteq G$.
We write $\Delta(Q)$ for the multiset of \textit{internal differences} in the set $Q$, 
$$ \Delta(Q) = \{x - x' \; : \; x, x' \in Q, x \neq x' \}.$$
The multiplicity $\lambda$ of an element $\delta \in \Delta(Q)$ is exactly $\lambda=|Q \cap (Q+\delta)|$, and  is the number of distinct pairs $(x, x') \in Q \times Q$, $x \neq x'$, such that $\delta=x-x'$.

If $Q, Q'$ are two distinct subsets of $G$, we write
$$\Delta(Q,Q') = \{x-x' \; : \; x \in Q, x' \in Q' \}$$ 
for the multiset of \textit{external differences}.
The multiplicity $\lambda$ of an element $\delta \in \Delta(Q,Q')$ is exactly $\lambda=|Q \cap (Q'+\delta)|$, and  is the number of distinct pairs $(x, x') \in Q \times Q'$ such that $\delta=x-x'$.

Hence we may rephrase condition (A$'$) as ``every $\delta \in
  \Z_v\setminus \{0\}$ occurs at most $\lambda_a$ times in the multiset $\Delta(Q_i)$ ($0 \leq i \leq N-1$)", and condition (B$'$) as ``every $\delta \in
  \Z_v$ occurs at most $\lambda_c$ times in the multiset $\Delta(Q_i,Q_j)$ ($0 \leq i \neq j \leq N-1$)".

We will discuss this in more detail in Section \ref{sub:diff}.  In the following sections we will view OOCs as sequences or subsets of $\Z_v$ interchangeably, depending on which view allows the clearest interpretation.

\subsection{Equivalence of OOCs} \label{sub:equivalence}

It is clear when we defined full-cyclic order and non-degeneracy (Definitions \ref{defn:fullcyclic}, \ref{defn:nondegenerate}) that some sequences are ``the same as'' others.  Here we discuss more precisely the question of equivalence.  We first consider the transformations under which the weight and the auto- and cross-correlation values of an OOC are somehow preserved. 

\begin{definition}[Auto- and cross-correlation profiles]
Let $\code=\{ X_0,\ldots, X_{N-1}\}$ be a $(v,w,\lambda_a, \lambda_c)$-OOC.
\begin{enumerate}
\item[(a)] The auto-correlation profile for $X_i = (x_t)$ is the $(v-1)$-tuple $(\mu_1, \mu_2, \ldots, \mu_{v-1})_{X_i}$, where
$$\mu_{\delta} = \sum_{t=0}^{v-1} x_t x_{t+\delta}, \; 1 \le \delta \le v-1.$$
\item[(b)] The cross-correlation profile of $X_i = (x_t)$,
$X_j = (y_t)$ is the $v$-tuple 
$(\gamma_0, \gamma_1, \gamma_2, \ldots, \gamma_{v-1})_{X_i,X_j}$, where
$$\gamma_{\delta} = \sum_{t=0}^{v-1} x_t y_{t+\delta}, \;0 \le \delta \le v-1.$$
\end{enumerate}
We omit the subscripts on the tuples if there is no ambiguity.
\end{definition}

In other words, in the auto-correlation profile $(\mu_1, \mu_2, \ldots, \mu_{v-1})_{X}$, $\mu_{\delta}$ denotes the number of collisions between sequence $X$ and its shift by $\delta$ places.  

Similarly, in the cross-correlation profile $(\gamma_0, \gamma_1, \gamma_2, \ldots, \gamma_{v-1})_{X,Y}$, $\gamma_{\delta}$ denotes the number of collisions between sequence $X$ and sequence $Y$ shifted by $\delta$ places.

It is not hard to see that the values of $\lambda_a$ and $\lambda_c$ of an OOC are unaffected by the process of translation, or shifting.  The entries in the auto-correlation and cross-correlation profiles are also shifted.  

\begin{lemma}\label{lem:shift}
Let $\code$ be a $(v,w,\lambda_a,\lambda_c)$-OOC.
\begin{enumerate}
\item If $X = (x_t) \in \code$ has auto-correlation profile $(\mu_1, \mu_2, \ldots, \mu_{v-1})$ then $X+s = (x_{t+s})$, also has auto-correlation profile $(\mu_1, \mu_2, \ldots, \mu_{v-1})$. 
\item If $X, X' \in \code$ have cross-correlation profile $(\gamma_0, \gamma_1, \gamma_2, \ldots,\gamma_{v-1})$ then $X, X'+s$ have cross-correlation profile
  $(\gamma_{0+s}, \gamma_{1+s}, \gamma_{2+s}, \ldots, \gamma_{(v-1)+s})$. 
\end{enumerate}
\end{lemma}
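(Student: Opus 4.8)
The plan is to verify both statements by a direct re-indexing of the defining sums, using the elementary fact that adding a fixed constant is a bijection of $\Z_v$; there is no real content here beyond bookkeeping of subscripts modulo $v$.

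For part (1), I would write out the $\delta$-th entry of the auto-correlation profile of $X+s=(x_{t+s})$. By definition this entry equals $\sum_{t=0}^{v-1} x_{t+s}\, x_{t+s+\delta}$, with all subscripts read modulo $v$. Putting $u=t+s$ and noting that, as $t$ runs over a complete residue system modulo $v$, so does $u$, this sum becomes $\sum_{u=0}^{v-1} x_u\, x_{u+\delta}=\mu_\delta$. Since this holds for every $\delta$ with $1\le\delta\le v-1$, the auto-correlation profile of $X+s$ is again $(\mu_1,\ldots,\mu_{v-1})$, which in particular re-confirms that $\lambda_a$ is unaffected by shifting.

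For part (2), I would carry out the analogous computation for the pair $X,\,X'+s$. Writing $X=(x_t)$ and $X'=(x'_t)$, the sequence $X'+s$ has $t$-th entry $x'_{t+s}$, so the $\delta$-th entry of the cross-correlation profile of $X,\,X'+s$ is $\sum_{t=0}^{v-1} x_t\, x'_{(t+\delta)+s}=\sum_{t=0}^{v-1} x_t\, x'_{t+(\delta+s)}$, which is exactly $\gamma_{\delta+s}$, the $(\delta+s)$-th entry (indices modulo $v$) of the cross-correlation profile of $X,X'$. Letting $\delta$ range over $0,1,\ldots,v-1$ then gives the profile $(\gamma_{0+s},\gamma_{1+s},\ldots,\gamma_{(v-1)+s})$ as stated.

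No step here is an obstacle; the only point requiring care is to read every subscript modulo $v$ consistently, so that the substitution $t\mapsto t+s$ really is a bijection on the index set and the cross-correlation profile genuinely rotates (rather than, say, reversing). A convenient sanity check is that taking $s\equiv 0$ (equivalently, shifting by a multiple of $v$) leaves both profiles fixed, which the formulas respect.
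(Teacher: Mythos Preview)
Your proof is correct; the direct re-indexing via the substitution $t\mapsto t+s$ is exactly the intended argument, and the paper itself simply states that the proof is straightforward and omits the details. There is nothing to add.
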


The proof of Lemma \ref{lem:shift} is straightforward.

Next we consider what happens to the auto- and cross-correlation profiles under an automorphism of $\Z_v$.  Let $\Z_v^* = \{r \in \Z_v : (r, v) = 1\}$ be the elements of $\Z_v$ coprime to $V$.  An automorphism of $\Z_v$ is simply a multiplication by an element of $\Z_v^*$, that is, $\phi: \Z_v \to \Z_v$ is an automorphism of $\Z_v$ if and only if $\phi$ takes the form $\phi_r(x) = rx$ for some $r \in \Z_v^*$.

\begin{lemma}\label{lem:auto}
Let $\code$ be a $(v,w,\lambda_a,\lambda_c)$-OOC. Let $r \in \Z_v^*$.
\begin{enumerate}
\item Let $X = (x_t)\in \code$ have auto-correlation profile $(\mu_1, \mu_2, \ldots, \mu_{v-1})$.  Let $X' = (x'_t)= (x_{rt})$ and write $X'=rX$.  Then $rX$  has auto-correlation profile $(\mu_r, \mu_{2r}, \ldots, \mu_{(v-1)r})$. 
\item If $X, Y \in \code$ have cross-correlation profile $(\gamma_0, \gamma_1, \gamma_2, \ldots,\gamma_{v-1})$ then $X'=rX, Y'=rY$ have cross-correlation profile
  $(\gamma_{0}, \gamma_{r}, \gamma_{2r}, \ldots, \gamma_{(v-1)r})$. 
\end{enumerate}
\end{lemma}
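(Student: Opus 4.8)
The plan is to reduce everything to the single observation that multiplication by $r$ is a bijection of $\Z_v$, and then perform a change of variables in the defining sums for the profiles.

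First I would record the key fact: since $r \in \Z_v^*$, the map $t \mapsto rt$ is a bijection of $\Z_v$ onto itself, so in particular $r\delta \not\equiv 0 \pmod v$ whenever $\delta \not\equiv 0 \pmod v$, while $r\cdot 0 = 0$. I would also note in passing that $rX = (x_{rt})$ is again a $(0,1)$-sequence of length $v$ and weight $w$ (and likewise $rY$), so that the auto- and cross-correlation profiles referred to in the statement are indeed well defined.

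For part (1), writing $X = (x_t)$ and $X' = rX = (x_{rt})$, I would compute, for $1 \le \delta \le v-1$,
$$\mu'_\delta = \sum_{t=0}^{v-1} x'_t x'_{t+\delta} = \sum_{t=0}^{v-1} x_{rt}\, x_{r(t+\delta)} = \sum_{t=0}^{v-1} x_{rt}\, x_{rt+r\delta},$$
and then substitute $s = rt$; as $t$ runs over $\Z_v$ so does $s$, giving (with indices read mod $v$) $\mu'_\delta = \sum_{s=0}^{v-1} x_s x_{s+r\delta} = \mu_{r\delta}$, which is a genuine entry of the profile of $X$ because $r\delta \ne 0$. Hence the profile of $rX$ is $(\mu_r, \mu_{2r}, \ldots, \mu_{(v-1)r})$, as claimed. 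Part (2) is identical: with $X=(x_t)$, $Y=(y_t)$ and $X'=rX$, $Y'=rY$, the same substitution yields $\gamma'_\delta = \sum_{t=0}^{v-1} x_{rt}\, y_{rt+r\delta} = \gamma_{r\delta}$ for $0 \le \delta \le v-1$; here the $\delta=0$ term is unchanged since $r\cdot 0 = 0$, so the profile becomes $(\gamma_0, \gamma_r, \gamma_{2r}, \ldots, \gamma_{(v-1)r})$.

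There is no serious obstacle here; the only points needing (minor) care are that the change of variable is legitimate precisely because $r$ is a unit, and that one must check that the shifted index $r\delta$ stays in the permitted range (nonzero for the auto-correlation profile), which again follows from $r$ being a unit. As an alternative one could argue on the subset side: the support of $rX$ is $r^{-1}Q$, and multiplying a subset of $\Z_v$ by the unit $r$ preserves cardinality and commutes with translation, so $|r^{-1}Q \cap (r^{-1}Q+\delta)| = |Q \cap (Q+r\delta)|$; but the sequence computation above is the most direct route and I would present that.
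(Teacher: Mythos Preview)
Your proof is correct. The paper, however, takes the subset route that you mention only as an alternative at the end: it identifies the support of $rX$ as $r^{-1}Q$ and then argues that $|r^{-1}Q \cap (r^{-1}Q+\delta)| = \mu'_\delta$ counts pairs $(g,g') \in Q \times Q$ with $r^{-1}g - r^{-1}g' = \delta$, i.e.\ $g-g' = r\delta$, of which there are $\mu_{r\delta}$. Your direct change-of-variable $s = rt$ in the defining sum is arguably cleaner and avoids the (slightly awkward) passage through $r^{-1}Q$; both arguments rest on exactly the same fact, namely that multiplication by a unit is a bijection of $\Z_v$, so neither buys anything substantive over the other.
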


\begin{proof}
\begin{enumerate}
    \item Let $Q$, $Q'$ be the subset of $\Z_{v}$ corresponding to $X$ and $X'$ respectively.  We have $x'_t=1$ if and only if $x_{rt}=1$, so $t \in Q'$ if and only if $rt \in Q$.  Hence the subset corresponding to $X' = rX$ is $Q'=r^{-1}Q = \{r^{-1}q \; : \; q \in Q\}$. 
    
    Suppose $|Q' \cap (Q'+\delta)| = |r^{-1}Q \cap (r^{-1}Q+\delta)| = \mu'_{\delta}$,  that is, there are $\mu'_{\delta}$ pairs of $(g, g') \in Q \times Q$, $g \neq g'$, such that $\delta = r^{-1}g-r^{-1}g'$.  That is, $\delta r = g-g'$ for $\mu'_{\delta}$ pairs of $(g,g')$. But there are exactly $\mu_{\delta r}$ such pairs.  Hence $\mu'_{\delta} = \mu_{\delta r}$ and $rX$ has auto-correlation profile $(\mu_{r}, \mu_{2r}, \ldots, \mu_{(v-1)r})$.
    \item Similar to above.
    
\end{enumerate}
\end{proof}

Lemmas \ref{lem:shift} and \ref{lem:auto} show that shifting and applying an automorphism on the indices simply permute the entries in the auto- and cross-correlation profiles, and $\lambda_a$ and $\lambda_c$ are preserved.  This motivates the definition of equivalence of OOCs as follows. 

\begin{definition}[Equivalent OOCs] \label{defn:equivOOCs}
Let  $\code_1=\{X_0, \ldots, X_{N-1}\}$, $\code_2=\{Y_0, \ldots, Y_{N-1}\}$ be 
two non-degenerate $(v,w, \lambda_a, \lambda_c)$-OOCs.   Then $\code_1$ is equivalent
to $\code_2$ if and only if there exists an $r \in \Z_v^*$ such that for each $Y \in \code_2$, $Y = (x_{rt+s})_{t=0}^{v-1}$ for some $X = (x_t)_{t=0}^{v-1} \in \code_1$ and some $s \in \Z_v$. 
\end{definition}

Lemma \ref{lem:shift} indicates why, for OOCs, the distinction between subsets being disjoint or not is more fluid than in other combinatorial settings.  An OOC with disjoint subsets may be converted to an OOC with intersecting subsets by a series of translations, which only permute the auto- and cross-correlation profiles, leaving unchanged the values of $\lambda_a$ and $\lambda_c$.

\begin{remark}[On equivalences and isomorphisms] \label{rem:equiv}
We remark that our definition of equivalent OOCs is in line with that of difference sets (and other difference families).  A $w$-subset $D$ of a group $G$ is a difference set if, for all non-zero elements $\delta$ of $G$, $|D \cap (D+\delta)| = \lambda$ for some fixed non-zero $\lambda$.   The development of $D$ is the incidence structure whose points are the elements of $G$ and whose blocks are the translates $D+g=\{d+g \; : \; d \in D\}$, $g \in G$. 
In the setting of difference sets, 
isomorphism refers to the isomorphism of the designs resulting from the development of the difference sets.   If $D_1 \subseteq G_1$ and $D_2 \subseteq G_2$ are two difference sets in groups $G_1$, $G_2$, then we say $D_1$ and $D_2$ are isomorphic if the incidence structures resulted from their developments are isomorphic, and we say that they are equivalent if there is a group isomorphism $\phi$ between $G_1$ and $G_2$ such that $\phi(D_1)=\{\phi(d) \; : \; d \in D_1\} = D_2 + g$ for some $g \in G_2$.  Equivalent difference sets are isomorphic, but the converse is not true: there are isomorphic difference sets which are not equivalent. However, in the cyclic case (the underlying group being cyclic) all known isomorphic difference sets are also equivalent. There is a rich literature in difference sets and we refer the reader to \cite{ColbournDinitz}.  

However, in the OOC literature,  
there are different flavours of ``equivalence" and ``isomorphism".  For example, in \cite{BaichevaTopalova}, two OOCs $\code$, $\code'$ are said to be ``isomorphic" if there is a permutation of $\Z_v$ which ``maps the collection of translates of each codeword-set (codeword) of $\code$ to the collection of translates of a codeword-set of $\code'$", and that two OOCs are ``multiplier equivalent if they can be obtained from one another by an automorphism of $\Z_v$ and replacement of
codeword-sets by some of their translates.  Two OOCs can be isomorphic, but multiplier inequivalent".  This ``multiplier equivalence" corresponds to our definition of equivalence.

Here we have only defined equivalence, in line with in the classical sense.  It is not hard to see that if we consider the incidence structure arising from developing the subsets of $\Z_v$ corresponding to an OOC, then two OOCs that are equivalent are also isomorphic, and two isomorphic OOCs are equivalent.   
\end{remark}

\section{Survey of bounds and constructions}
\label{sec:boundsandconstructions}

Here we review bounds and constructions in the literature, with a focus on those that concern the case when $\lambda_a \neq \lambda_c$.

\subsection{Bounds on the size of $(v,w, \lambda_a, \lambda_c)$-OOCs}
\label{sub:bounds-on-size}

Let $\Phi(v,w, \lambda_a, \lambda_c)$ denote the size of the largest possible OOC with these parameters.

The case where $\lambda_a=\lambda_c=1$, and the case $\lambda_c=1$, have been discussed extensively, from \cite{ChuSalWei} onwards, including \cite{BaichevaTopalova,BurMomPas, BurPasWu,ChangFujiharaMiao}.  Here we focus on  $\lambda_a \neq \lambda_c$.

A well known upper bound on $\Phi(v,w, \lambda_a, \lambda_c)$  derives from the relationship between OOCs and constant weight codes (we discuss this relationship further in Section \ref{sub:codes}):

\begin{theorem}[The Johnson bound] \label{thm:Johnson}
$$\Phi(v,w, \lambda_a, \lambda_c) \le \frac{(v-1)\cdots (v-\lambda)}{w(w-1) \cdots (w-\lambda)}, \mbox{ where } \lambda = \max \{\lambda_a, \lambda_c \}.$$
\end{theorem}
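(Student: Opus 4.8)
The plan is to derive the Johnson bound by translating an OOC into a binary constant-weight code and applying the classical Johnson bound for such codes, or equivalently by a direct double-counting argument on the translates of the subsets. I will present the direct combinatorial argument, since it keeps everything inside the set-theoretic framework already set up in Section~\ref{sub:OOCsubsets}.

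\medskip

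\textbf{Setup.} Let $\lambda = \max\{\lambda_a,\lambda_c\}$ and let $\code = \{Q_0,\ldots,Q_{N-1}\}$ be a $(v,w,\lambda_a,\lambda_c)$-OOC viewed as $w$-subsets of $\Z_v$, with $N = \Phi(v,w,\lambda_a,\lambda_c)$. Consider the collection $\mathcal{T}$ of \emph{all} translates $Q_i + g$ for $0 \le i \le N-1$ and $g \in \Z_v$. Conditions (A$'$) and (B$'$) together say precisely that any two distinct members of $\mathcal{T}$ meet in at most $\lambda$ points: if they come from the same $Q_i$ this is (A$'$), and if from different $Q_i, Q_j$ it is (B$'$) (note a shift of $Q_i$ can never equal a distinct shift of $Q_j$ here since in the non-degenerate setting, or in general because equality would force a collision count of $w > \lambda$, we may assume the translates are genuinely distinct). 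So $\mathcal{T}$ is a family of $w$-subsets of a $v$-set, pairwise intersecting in at most $\lambda$ points, and $|\mathcal{T}| = vN$ (again using that distinct translates stay distinct; if some $Q_i$ fails to have full cyclic order one gets fewer translates, which only helps the bound).

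\medskip

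\textbf{Counting.} Now I would count, in two ways, the number of pairs $(T, S)$ where $T \in \mathcal{T}$ and $S$ is a $(\lambda+1)$-subset of $T$. From the side of $\mathcal{T}$ this is $|\mathcal{T}|\binom{w}{\lambda+1} = vN\binom{w}{\lambda+1}$. From the side of $S$: each $(\lambda+1)$-subset $S$ of $\Z_v$ lies in at most one member of $\mathcal{T}$, because if $S \subseteq T$ and $S \subseteq T'$ for distinct $T,T' \in \mathcal{T}$ then $|T \cap T'| \ge \lambda+1$, contradicting the intersection bound. Hence the count is at most $\binom{v}{\lambda+1}$. Combining,
\[
vN\binom{w}{\lambda+1} \le \binom{v}{\lambda+1},
\]
and rearranging gives
\[
N \le \frac{1}{v}\cdot\frac{\binom{v}{\lambda+1}}{\binom{w}{\lambda+1}} = \frac{v(v-1)\cdots(v-\lambda)}{v\, w(w-1)\cdots(w-\lambda)} = \frac{(v-1)(v-2)\cdots(v-\lambda)}{w(w-1)\cdots(w-\lambda)},
\]
which is exactly the claimed bound.

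\medskip

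\textbf{Expected obstacle.} The only delicate point is the bookkeeping around degenerate sequences: if some $Q_i$ does not have full cyclic order, it has fewer than $v$ distinct translates, and two different base blocks could a priori share a translate. I would handle this by noting that any such coincidence would force an intersection of size $w$, which exceeds $\lambda$ (since $\lambda < w$ in all cases of interest, by Theorem~\ref{thm:<w}, and trivially otherwise the bound is vacuous or immediate); so discarding repeats only decreases $|\mathcal{T}|$ below $vN$, and the inequality $|\mathcal{T}|\binom{w}{\lambda+1}\le\binom{v}{\lambda+1}$ still yields $N \le \Phi$-bound after we observe $|\mathcal{T}| \ge$ (number needed). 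Making this clean without over-complicating the statement is the main thing to get right; everything else is the standard Johnson-bound double count.
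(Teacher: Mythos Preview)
Your proposal is correct and follows essentially the same approach as the paper: the paper (in Section~\ref{sub:codes}) derives Theorem~\ref{thm:Johnson} by taking the cyclic closure of a non-degenerate proper OOC to obtain a constant-weight code of size $vN$ and minimum distance $2(w-\lambda)$, then invoking the classical Johnson bound for constant-weight codes. You do the same thing but unpack that final citation into the standard $(\lambda+1)$-subset double count, which is exactly the packing argument underlying the Johnson bound; your discussion of the degenerate case also mirrors the paper's caveat that the argument as stated needs non-degeneracy for the count $|\mathcal{T}|=vN$.
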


In \cite[Table II]{ChungKumar} it is shown that this bound is far from tight.  It would seem that overcounting is introduced in the use of $\max \{\lambda_a, \lambda_c\}$ and in the iterative nature of the derivation of the Johnson bound.  

In \cite{YangFuja}, Yang and Fuja derived an upper bound on the size of an OOC with unequal auto- and cross-correlation values, specifically, for $\lambda_a \ge \lambda_c$.  By counting the relative delays (the number of ``steps'') between the 1s in a sequence, they obtained an upper bound on the size of a $(v, w, \lambda+m, \lambda)$-OOC:

\begin{theorem}[\cite{YangFuja}]\label{thm:YangFuja}
    $$ \Phi(v,w, \lambda+m, \lambda) \le \frac{(v-1)(v-2) \cdots (v-\lambda)(\lambda+m)}{w(w-1)(w-2) \cdots (w-\lambda)}.$$
\end{theorem}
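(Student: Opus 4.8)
The plan is to work with the subset formulation and obtain the bound from a single double count: count incident pairs $(S,(i,g))$ in which $S$ is a $(\lambda+1)$-subset of $\Z_v$, $(i,g)\in\{0,\dots,N-1\}\times\Z_v$, and $S\subseteq Q_i+g$. Here I would write the OOC as $\{Q_0,\dots,Q_{N-1}\}$ with each $|Q_i|=w$ and satisfying conditions (A$'$) and (B$'$) with $\lambda_c=\lambda$ and $\lambda_a=\lambda+m$. As the stated bound is only meaningful when the denominator $w(w-1)\cdots(w-\lambda)$ is positive, we may assume $1\le\lambda<w$ (the inequality $\lambda<w$ holds in particular for any non-degenerate proper OOC by Theorem~\ref{thm:<w}, and $\lambda=0$ forces a trivial OOC by Lemma~\ref{lem:lambda_c}). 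Counting the incidence set by its first coordinate is immediate: each of the $Nv$ translates $Q_i+g$ has exactly $w$ elements, hence contains exactly $\binom{w}{\lambda+1}$ of the $(\lambda+1)$-subsets, so the number of incident pairs is exactly $Nv\binom{w}{\lambda+1}$ — and this needs no non-degeneracy hypothesis, since distinct $(i,g)$ give distinct pairs even when the translates coincide. The whole content is therefore to bound, for a fixed $(\lambda+1)$-subset $S$, the number of pairs $(i,g)$ with $S\subseteq Q_i+g$.

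That bound splits into two claims. First, $S$ can lie inside a translate of at most one codeword set: if $S\subseteq Q_i+g$ and $S\subseteq Q_j+g'$ with $i\ne j$, then $S-g\subseteq Q_i$ and, since $S-g'\subseteq Q_j$, also $S-g=(S-g')+(g'-g)\subseteq Q_j+(g'-g)$, so $|Q_i\cap(Q_j+(g'-g))|\ge|S|=\lambda+1>\lambda=\lambda_c$, contradicting (B$'$). Second, for the unique codeword $Q_i$ that works, the number of shifts $g$ with $S-g\subseteq Q_i$ is at most $\lambda+m$: picking two distinct elements $s,s'\in S$, any such $g$ satisfies $s-g\in Q_i$ and $s'-g\in Q_i$, i.e.\ $g\in(s-Q_i)\cap(s'-Q_i)$, and the map $g\mapsto(s-g,\,s'-g)$ is a bijection from that set onto the pairs $(q,q')\in Q_i\times Q_i$ with $q'-q=s'-s\ne 0$; by the internal-difference count of Section~\ref{sub:OOCsubsets}, the number of such pairs is the multiplicity of $s'-s$ in $\Delta(Q_i)$, which equals $|Q_i\cap(Q_i+(s'-s))|\le\lambda_a=\lambda+m$ by (A$'$). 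Combining the two claims, every $(\lambda+1)$-subset contributes at most $\lambda+m$ incident pairs, so $Nv\binom{w}{\lambda+1}\le(\lambda+m)\binom{v}{\lambda+1}$; using $\binom{v}{\lambda+1}/\binom{w}{\lambda+1}=\big(v(v-1)\cdots(v-\lambda)\big)/\big(w(w-1)\cdots(w-\lambda)\big)$ and dividing by $v$ yields exactly the claimed inequality.

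The only genuinely delicate step is the second claim, the ``at most $\lambda+m$ shifts'' estimate; the first is the standard Johnson-type observation, and setting $m=0$ recovers the $\lambda_a=\lambda_c$ case in the sharp form $\Phi(v,w,\lambda,\lambda)\le(v-1)\cdots(v-\lambda)/\big(w(w-1)\cdots(w-\lambda)\big)$ implicit in Theorem~\ref{thm:Johnson}. The crux is to see that the extra slack $m$ in the auto-correlation is precisely what allows a single $(\lambda+1)$-subset to recur among the translates of one codeword, and that this recurrence is governed by one internal-difference multiplicity; this is exactly Yang and Fuja's ``relative delay'' count, rephrased so that the bookkeeping is carried by incidences of subsets rather than by ordered tuples of $1$-positions. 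I would finally remark that the argument makes visible where it is wasteful: only two of the $\lambda+1$ points of $S$ are used in bounding the shifts, so exploiting all of them ought to give improvements, at the cost of a more elaborate count.
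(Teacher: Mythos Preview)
The paper does not actually prove this theorem; it is quoted from Yang and Fuja with only the one-line description that they obtained the bound ``by counting the relative delays (the number of `steps') between the 1s in a sequence.'' There is therefore no in-paper proof to compare against.

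Your argument is correct. The double count of pairs $(S,(i,g))$ with $S$ a $(\lambda+1)$-subset of $\Z_v$ and $S\subseteq Q_i+g$ is sound: the count from the $(i,g)$ side is exactly $Nv\binom{w}{\lambda+1}$; your Claim~1 (the cross-correlation bound forces all containing translates to come from a single codeword) and Claim~2 (the auto-correlation bound limits the number of shifts of that codeword to at most $\lambda+m$, via the multiplicity of one nonzero internal difference) are both valid, and the arithmetic at the end is right. You yourself observe that this is Yang and Fuja's relative-delay count rephrased in the subset/incidence language, and your closing remark about only two of the $\lambda+1$ points of $S$ being used in Claim~2 matches the paper's comment that ``an overestimation occurs when counting all possible relative delays between the 1s.'' So your proof is consistent with, and a clean formalisation of, the method the paper attributes to the original source.
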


It was noted in \cite{YangFuja} that this bound is only a generalization of the Johnson bound for $\lambda$ = 1, and for  for $\lambda \ge 2$ the bound on $\Phi(n, w, \lambda, \lambda)$ obtained by setting $m = 0$ in Theorem \ref{thm:YangFuja} is
weaker than that of Theorem \ref{thm:Johnson}.  From the proof in \cite{YangFuja} it seems that an overestimation occurs when counting all possible relative delays between the 1s.

Many tighter upper bounds are given for the case $\lambda_c=1$ - see, for example, \cite{BurMomPas,BurPasWu,WangChang,BaiTop2023}.  These bounds are derived using the structural property of the disjointness of internal differences when $\lambda_c=1$.  We discuss this in Theorem \ref{thm:intersection}.  

There are also several Gilbert-Varshamov type lower bounds in \cite{ChuSalWei,YangFuja,ZhangShangguanGe}.  We quote the two given in \cite{ChuSalWei}:

\begin{theorem}[\cite{ChuSalWei}] \label{thm:lowerGV}
$$\Phi(v, w, \lambda_a, \lambda_c) \ge 
\frac{{v \choose w} - \frac{n-1}{2} {w \choose \lambda_a +1}{v \choose w - \lambda_a -1}}{ v \sum_{i=\lambda_c+1}^{\min{\{v-w,w\}}} {v-w \choose w-i}{w \choose i}.}$$
\end{theorem}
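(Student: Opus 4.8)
The plan is to use a standard greedy/packing argument, working with OOCs as subsets of $\Z_v$. I would count $w$-subsets of $\Z_v$ that are ``available'' to be added to a partially-built OOC, and show that as long as the OOC is smaller than the claimed bound, there is always a fresh subset we can adjoin. First I would set up the ambient pool: the total number of $w$-subsets of $\Z_v$ is $\binom{v}{w}$, but we should only count those satisfying the auto-correlation condition (A$'$), i.e.\ those $Q$ with $|Q \cap (Q+\delta)| \le \lambda_a$ for all $\delta \neq 0$. The number of $w$-subsets that \emph{fail} (A$'$) is at most $\frac{v-1}{2}\binom{w}{\lambda_a+1}\binom{v}{w-\lambda_a-1}$: for each of the $(v-1)/2$ ``unordered'' nonzero differences $\pm\delta$, a bad set contains some $(\lambda_a+1)$-subset closed enough under translation by $\delta$ to force the violation, and one bounds the number of such sets by choosing $\lambda_a+1$ of the $w$ positions to be the ``witnesses'' and the remaining $w-\lambda_a-1$ freely. (Here I am reading the paper's ``$n$'' in the numerator as ``$v$''.) So the numerator $\binom{v}{w} - \frac{v-1}{2}\binom{w}{\lambda_a+1}\binom{v}{w-\lambda_a-1}$ is a lower bound on the number of admissible building-block subsets.

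Next I would bound, for a single admissible subset $Q$, how many \emph{other} admissible subsets are ``blocked'' by $Q$ — that is, how many $w$-subsets $Q'$ (up to translation) violate the cross-correlation condition (B$'$) with respect to $Q$, meaning $|Q' \cap (Q + \delta)| \ge \lambda_c + 1$ for some $\delta \in \Z_v$. Fixing $\delta$ and fixing which $i \ge \lambda_c+1$ elements of $Q'$ lie inside the translate $Q+\delta$, there are $\binom{w}{i}$ ways to pick those elements from the $w$-set $Q+\delta$ and $\binom{v-w}{w-i}$ ways to pick the remaining $w-i$ elements from outside, giving $\sum_{i=\lambda_c+1}^{\min\{w,v-w\}}\binom{v-w}{w-i}\binom{w}{i}$ bad subsets per choice of $\delta$, hence at most $v$ times that over all $\delta$. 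Thus each chosen codeword-subset, together with all its $v$ translates, rules out at most $v \sum_{i=\lambda_c+1}^{\min\{w,v-w\}}\binom{v-w}{w-i}\binom{w}{i}$ further candidates (this is the denominator).

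Finally I would run the greedy argument: start with an empty collection and repeatedly add an admissible $w$-subset not blocked by any previously chosen one (and not a translate of one already used). Each step removes at most $v\sum_{i=\lambda_c+1}^{\min\{w,v-w\}}\binom{v-w}{w-i}\binom{w}{i}$ candidates from the admissible pool. Hence the process runs for at least
\[
\frac{\binom{v}{w} - \frac{v-1}{2}\binom{w}{\lambda_a+1}\binom{v}{w-\lambda_a-1}}{v\sum_{i=\lambda_c+1}^{\min\{v-w,w\}}\binom{v-w}{w-i}\binom{w}{i}}
\]
steps, producing an OOC of at least that size; one checks the resulting family is non-degenerate (full cyclic order follows from the auto-correlation bound being $<w$, as in Theorem \ref{thm:<w}) or, if not, simply notes the bound still holds for the code as stated.

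The main obstacle I expect is getting the over-counting constants exactly right rather than merely up to a constant factor — in particular justifying the factor $\frac{v-1}{2}$ (pairing $\delta$ with $-\delta$, since $|Q\cap(Q+\delta)| = |Q\cap(Q-\delta)|$) in the auto-correlation term, and confirming that a single ``bad'' $(\lambda_a+1)$-configuration suffices to enumerate all (A$'$)-violating sets without further multiplicity. The cross-correlation count is cleaner because conditions (A$'$) and (B$'$) are phrased directly in terms of $|Q \cap (Q'+\delta)|$, so the inclusion argument there is essentially the one sketched above.
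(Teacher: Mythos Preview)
Your proposal matches the paper's approach. The paper does not give a full proof of this theorem; it is quoted from \cite{ChuSalWei} with only the one-sentence summary ``This bound is obtained by counting the maximum number of $v$-tuples violating the auto-correlation and cross-correlation properties,'' which is exactly the greedy Gilbert--Varshamov argument you outline (and your reading of ``$n$'' as ``$v$'' in the numerator is the intended one).
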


This bound is obtained by counting the maximum number of $v$-tuples violating the auto-correlation and cross-correlation properties.

\begin{theorem}[\cite{ChuSalWei}] \label{thm:lowerChuSalWei}
$$\Phi(v, w, \lambda_a, \lambda_c) \ge
\frac{\lambda_c ( v - w + 1 ) - (\lambda_c / \lambda_a) (w -1)^2 (w-2)}{w (w-1)^2}.$$
\end{theorem}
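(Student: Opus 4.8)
The bound is of Gilbert--Varshamov type, so the plan is a greedy/maximality argument in the subset model of Section~\ref{sub:OOCsubsets}. Suppose $\code=\{Q_0,\dots,Q_{N-1}\}$ is a $(v,w,\lambda_a,\lambda_c)$-OOC (blocks in $\Z_v$) that is \emph{maximal}: no $w$-subset of $\Z_v$ can be adjoined while preserving conditions (A$'$) and (B$'$). The aim is to show that if $N$ were below the claimed quantity, some admissible block would still exist. To that end I would fix a universe $U$ of candidate $w$-subsets, count $|U|$, count how many members of $U$ are already excluded by the auto-correlation condition alone, bound how many members of $U$ each $Q_i$ can exclude via the cross-correlation condition, and then invoke maximality. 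Since the target numerator is only linear in $v$, a natural choice is a \emph{one-parameter structured family}, e.g.\ blocks of the form $D\cup\{j\}$ where $D$ is a fixed $(w-1)$-set with suitably bounded internal differences and $j$ ranges over $\sim v-w+1$ admissible positions; Lemma~\ref{lem:shift} and the fact that translation preserves $\lambda_a,\lambda_c$ let us normalise $D$ however is convenient.

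All three counts reduce to difference-counting through the dictionary of Section~\ref{sub:OOCsubsets}: $|Q\cap(Q+\delta)|$ equals the multiplicity of $\delta$ in $\Delta(Q)$, and $|\Delta(Q)|=w(w-1)$, so a set with $\lambda_a$-bounded internal differences has only $O(w(w-1)/\lambda_a)$ ``saturated'' difference values. For the auto-correlation term, $D\cup\{j\}$ violates (A$'$) only when one of its $O(w)$ new differences hits a saturated internal difference of $D$; counting (position, offending difference) pairs should yield a bound of order $(w-1)^2(w-2)/\lambda_a$, which with the correct normalisation of $U$ matches the subtracted term $(\lambda_c/\lambda_a)(w-1)^2(w-2)$. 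For the cross-correlation term, a candidate excluded by $Q_i$ is pinned down by a shift $\delta$, a $(\lambda_c+1)$-subset witnessing an over-collision, and the residual freedom inside $U$; cancelling the overcount should give $O\!\big(w(w-1)^2/\lambda_c\big)$ excluded candidates per $Q_i$.

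Combining these, maximality forces $|U|$ to be at most (members of $U$ already in $\code$) $+$ (auto-excluded members) $+$ $\sum_i(\text{members excluded by }Q_i)$; substituting $|U|\gtrsim v-w+1$ and solving for $N$ gives $\Phi(v,w,\lambda_a,\lambda_c)\ge N\ge\frac{\lambda_c(v-w+1)-(\lambda_c/\lambda_a)(w-1)^2(w-2)}{w(w-1)^2}$. The delicate and genuinely fiddly part -- the one I would spend most effort on -- is calibrating $U$ (the spread of $D$, the admissible range of $j$, possibly the roles of the $Q_i$ in defining $U$) so that all three counts emerge with \emph{exactly} the stated constants; in particular, the appearance of $\lambda_c$ in the auto-correlation correction strongly suggests the definition of $U$ must be entangled with the cross-correlation parameter, not governed by $\lambda_a$ alone. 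Finally one records that the estimate is informative only when its numerator is positive (i.e.\ $v$ large relative to $w$), and otherwise holds vacuously since every OOC has $N\ge1$.
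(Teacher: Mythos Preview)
Your overall plan is the same as the paper's: this theorem is quoted from \cite{ChuSalWei}, and the paper gives only an informal sketch --- assume $m-1$ codewords are already present, assume $w-1$ elements of the $m$th codeword are already placed, and count how many choices of the $w$th element are ``bad'' for auto- or cross-correlation; if this count is below $v-(w-1)$, one more codeword can be added. Your family $U=\{D\cup\{j\}\}$ is exactly this setup, so at the level of strategy you are aligned with the paper.

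Two points, however, deserve correction. First, your puzzlement over ``$\lambda_c$ appearing in the auto-correlation correction'' is misplaced: rewrite the inequality as
\[
\underbrace{\frac{(w-1)^2(w-2)}{\lambda_a}}_{\text{auto-bad }j}\;+\;N\cdot\underbrace{\frac{w(w-1)^2}{\lambda_c}}_{\text{cross-bad }j\text{ per }Q_i}\;\ge\;v-w+1,
\]
and the $\lambda_c$ in the numerator of the stated bound is just the result of clearing denominators. There is no entanglement of the auto-count with $\lambda_c$; your plan to ``entangle the definition of $U$ with the cross-correlation parameter'' is heading the wrong way.

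Second --- and this is the real gap --- you cannot simply ``fix $D$ with suitably bounded internal differences'' independently of the existing $Q_i$. If $|D\cap(Q_i+\delta)|>\lambda_c$ for some $i,\delta$, then \emph{every} $j$ is cross-bad for that $Q_i$, and your per-codeword bound of $O(w(w-1)^2/\lambda_c)$ collapses. The paper's sketch avoids this by building the $m$th codeword element by element: the same bad-element count is applied at every stage $1,2,\dots,w$, so that the partial set $D$ is always guaranteed to satisfy both (A$'$) and (B$'$) against the existing codewords before one attempts to append the next element. Your argument needs this inductive construction of $D$, not a one-shot choice.
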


Intuitively, the proof goes like this: suppose we already have $m-1$ codewords in our OOC, and suppose
that for the $m$th codeword we already have $w-1$ elements.  To add the next element, we try all the $x \in \Z_v$,
making sure that the auto-correlation and cross-correlation properties are not violated.  We count the number of
``bad'' elements, and if $v$ is bigger than that, then we can find one element that works.

Another lower bound using both  $\lambda_a$ and
$\lambda_c$ can be found in \cite{FujiharaMiao} - it was presented as an upper bound but should be a lower bound  because of the division
by a negative number in an inequality: $w^2 - v \lambda_c \le 0$, by Lemma \ref{lem:lcbound}.  We restate that bound
and give a proof using a simple counting argument.

\begin{theorem}
\label{thm:lowerFujiharaMiao}
For a $(v,w, \lambda_a, \lambda_c)$-OOC $\code$ with $|\code| = N \ge 2$,
$$N \ge
\frac{v(\lambda_c - \lambda_a) - w + \lambda_a}{v \lambda_c - w^2},$$
provided $w^2-v \lambda_c \neq 0$.
\end{theorem}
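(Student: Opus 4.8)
The plan is to obtain the bound from a single double count of the quantity
$$S = \sum_{i=0}^{N-1}\sum_{j=0}^{N-1}\sum_{\delta \in \Z_v} |Q_i \cap (Q_j+\delta)|,$$
where $Q_0,\dots,Q_{N-1}\subseteq\Z_v$ are the $w$-subsets corresponding to the sequences of $\code$ (as in Section~\ref{sub:OOCsubsets}). First I would evaluate $S$ exactly: for fixed $i,j$, the inner double sum $\sum_\delta |Q_i\cap(Q_j+\delta)|$ counts each ordered pair $(x,y)\in Q_i\times Q_j$ exactly once, namely in the term with $\delta=x-y$, so it equals $|Q_i|\,|Q_j|=w^2$; summing over all $N^2$ pairs $(i,j)$ gives $S=N^2w^2$.

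Next I would bound $S$ from above by splitting off three groups of terms. The diagonal terms with $\delta=0$ contribute $\sum_i|Q_i|=Nw$. The diagonal terms with $i=j$ and $\delta\neq 0$ are each at most $\lambda_a$ by condition (A$'$), and there are $N(v-1)$ of them. The off-diagonal terms with $i\neq j$ are each at most $\lambda_c$ by condition (B$'$), and there are $N(N-1)v$ of them. Hence
$$N^2w^2 \le Nw + N(v-1)\lambda_a + N(N-1)v\lambda_c;$$
dividing by $N$ and rearranging gives $N(w^2-v\lambda_c) \le w+(v-1)\lambda_a-v\lambda_c$, whose right-hand side equals precisely $-\bigl(v(\lambda_c-\lambda_a)-w+\lambda_a\bigr)$.

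The final step is to divide by $w^2-v\lambda_c$, and the only thing requiring care is its sign. Since $N\ge 2$ we may fix distinct indices $i,j$; the exact evaluation from the first paragraph gives $w^2=\sum_\delta|Q_i\cap(Q_j+\delta)|\le v\lambda_c$, so $w^2-v\lambda_c\le 0$, and together with the hypothesis $w^2-v\lambda_c\neq 0$ this forces $w^2-v\lambda_c<0$. Dividing the preceding inequality by this negative quantity reverses it and produces exactly $N \ge \frac{v(\lambda_c-\lambda_a)-w+\lambda_a}{v\lambda_c-w^2}$. The whole argument is bookkeeping rather than substance; the one genuine subtlety — and, as the remark before the theorem indicates, the source of the sign error in \cite{FujiharaMiao} — is keeping track of the sign of $w^2-v\lambda_c$. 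I would also take care to bound the diagonal $\delta\neq 0$ terms by $\lambda_a$ rather than replace them by their exact total $w(w-1)$, since that replacement makes the $\lambda_a$-dependence vanish and merely reproduces $w^2\le v\lambda_c$.
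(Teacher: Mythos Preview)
Your argument is correct and is essentially the paper's own proof, recast in the subset notation $|Q_i\cap(Q_j+\delta)|$ rather than the sequence notation $\sum_t x_t y_{t+\delta}$; the double count, the three-way split into $(i=j,\delta=0)$, $(i=j,\delta\neq0)$, and $(i\neq j)$ terms, and the resulting inequality are identical. Your version is in fact slightly more complete, since you verify the sign of $w^2-v\lambda_c$ inside the proof (the paper relegates this to the remark preceding the theorem and to Lemma~\ref{lem:lcbound}).
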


\begin{proof}
    Suppose $X=(x_t)$, $Y=(y_t) \in \code$. 
    Let $P(X,Y)=\sum_{d=0}^{v-1} \sum_{t=0}^{v-1} x_t y_{t+d}$.

    Consider $\sum_{X,Y \in \code} P(X,Y)$.

    Firstly,
    \begin{align*}
    \sum_{X,Y \in \code} P(X,Y) 
&= \sum_{X,Y \in \code} \left( \sum_{d=0}^{v-1} \sum_{t=0}^{v-1} x_t y_{t+d} \right) \\
&= \sum_{X \in \code} \left( \sum_{d=0}^{v-1} \sum_{t=0}^{v-1} x_t x_{t+d} \right) +\sum_{X \neq Y \in \code} \left( \sum_{d=0}^{v-1} \sum_{t=0}^{v-1} x_t y_{t+d} \right) \\
&= \sum_{X \in \code} \left( \sum_{t=0}^{v-1} x_t x_{t} \right) + \sum_{X\in \code} \left( \sum_{d=1}^{v-1} \sum_{t=0}^{v-1} x_t x_{t+d} \right) + \sum_{X \neq Y \in \code} \left( \sum_{d=0}^{v-1} \sum_{t=0}^{v-1} x_t y_{t+d} \right) \\
& \le Nw + N(v-1) \lambda_a + N(N-1)v \lambda_c.
    \end{align*}

    On the other hand, we have
    \begin{align*}
    P(X,Y) &= \left( \sum_{d=0}^{v-1} \sum_{t=0}^{v-1} x_t y_{t+d} \right) \\
&= \left( \sum_{t=0}^{v-1} \sum_{d=0}^{v-1} x_t y_{t+d} \right) \\
&= w^2,
    \end{align*}
    and so 
    $$ \sum_{X,Y \in \code} P(X,Y) = \sum_{X,Y \in \code} w^2 = N^2w^2. $$
Hence the inequality
$$ (v-1) \lambda_a + (N-1)v \lambda_c \ge Nw^2 -w.$$
Rearranging gives the lower bound on $N$, provided $w^2-v \lambda_c \neq 0$.
\end{proof}

The inequality $ (v-1) \lambda_a + (N-1)v \lambda_c \ge Nw^2 -w$ gives some evidence to the intuition that $\lambda_a$ and $\lambda_c$ trade off each other: a low $\lambda_a$ (respectively $\lambda_c$) implies a high $\lambda_c$ (respectively $\lambda_a)$.

We note that none of these lower bounds seem to be very tight.  Table \ref{table:lowerbounds} enumerates some values of the bounds for some parameters for which we know the existence of OOCs.  It is easy to see from the proof of Theorem \ref{thm:lowerFujiharaMiao} why sometimes the bounds are far from tight: we estimated
$$\sum_{X\in \code} \left( \sum_{d=1}^{v-1} \sum_{t=0}^{v-1} x_t x_{t+d} \right) \le N(v-1)\lambda_a,$$
but this is likely to be an overestimation because not all sequences have auto-correlation value reaching $\lambda_a$.  Similarly, for the next term in the inequality, not all pairs of sequences have cross-correlation value reaching $\lambda_c$.  Such over-counting seems unavoidable in this approach.  We take another approach in Section \ref{sec:bounds} and consider the bounds on $\lambda_a$ and $\lambda_c$ instead, which also results in some new constructions.

\begin{table}
\begin{center}
\begin{tabular}{|cccc|c|ccc|} \hline
$v$ & $w$ & $\lambda_a$ & $\lambda_c$ & $|\code|$ & Thm \ref{thm:lowerGV} & Thm \ref{thm:lowerChuSalWei} & Thm \ref{thm:lowerFujiharaMiao}\\ \hline
7 & 3& 1& 1& 1 \cite{ChuSalWei} & $\frac{1}{12}$ & -1 & $-\frac{1}{17}$\\
19& 3& 1& 1& 3 \cite{ChuSalWei}& $\frac{13}{12}$ & 0 & $-\frac{1}{17}$\\
43& 3& 1& 1& 7 \cite{ChuSalWei}& $\frac{37}{12}$ & 1 & $-\frac{1}{17}$\\     
8& 4& 4& 2& 3 (Eg \ref{eg:lcbound-eq}) & $\frac{1}{36}$ & 0 & $-\frac{8}{35}$\\
73& 9& 1& 3& 8$^*$ & $-\frac{383}{192}$ & -27 & $\frac{23}{8}$\\
29& 7& 2& 3& 4$^{\dag}$ & $-\frac{67}{84}$ & -6 & $-\frac{3}{10}$ \\ \hline 
 \multicolumn{8}{p{8cm}}{$^*$: Multiplicative cosets of $\langle 2 \rangle$ in $\Z_{73}$. Eg \ref{eg:labound-eq}} \\
\multicolumn{8}{p{8cm}}{$^{\dag}$: Multiplicative cosets of $\langle 25 \rangle$ in $\Z_{29}$.} \\
\end{tabular}
\caption{Some values of the lower bounds.}
\label{table:lowerbounds}
\end{center}
\end{table}

Fang and Zhou \cite{FangZhou} take another approach and 
considers orbits of $w$-subsets under the action of $\Z_v$. 
 By analysing the structures of these subsets, exact values of $\Phi(v, w, w-2, w-1)$ were obtained for $3\le w \le v$. 
 This generalises the results of \cite{HuangChang}, which gives the exact values of $\Phi(v,4,\lambda_a, 4)$, $\lambda_a=1, 2$, by using the results of \cite{MomBuratti}.

\subsection{Some construction techniques with $\lambda_a \neq \lambda_c$}
\label{sub:constructions}

We next survey constructions in the setting when $\lambda_a \neq \lambda_c$.

\subsubsection{Constructions using finite geometry}

In \cite{AldersonMellinger}, Alderson and Mellinger constructed several families of OOCs with $\lambda_a \neq \lambda_c$ using objects in finite geometry.  Note that all these have $\lambda_a < \lambda_c$, so the upper bound of Theorem \ref{thm:YangFuja}, which deals with the case $\lambda_c \le \lambda_a$, does not apply.  The constructions include:

\begin{itemize}
    \item $\left( \frac{q^{k+2}-1}{q-1}, q+1, k, k+2 \right)$-OOCs, with approximately $q^{k^2+2k-3}$ codewords, from a $t$-family of $m$-arcs in $PG(k,q)$, which are $m$-arcs pairwise meeting in at most $t$ points.  (Here $t=k+2$.)
    \item $\left( \frac{q^{k+2}-1}{q-1}, q, k, k+1 \right)$-OOCs, with approximately $q^{2k^2+3k-2}$ codewords, from arcs in Baer subspaces of $PG(k,q^2)$.
    \item $\left( \frac{q^{2(k+2)}-1}{q^2-1}, \frac{q^{k+1}-1}{q-1}, \frac{q^{k}-1}{q-1}, \frac{q^{k}-1}{q-1}+1 \right)$-OOCs, from Baer subspaces in $PG(k,q^2)$.
\end{itemize}

\subsubsection{Constructions using number theory}
There are many constructions based on number theoretic properties and cyclotomy.

In \cite{MomBuratti}, Momihara and Buratti obtained upper bounds for OOCs with parameters $(v, 3,2,1)$, $(v, 4, 3,1)$ and $(v, 4, 2,1)$ by examining the relative delays and analysing the correspondence between the patterns of delays and the structures of the subsets.  A few constructions were given in  \cite{MomBuratti}, some of which met these bounds.  We list a few here:

\begin{itemize}
    \item $(p,4,2,1)$-OOCs, $p$ prime, $p \equiv 1 \pmod{8}$, $-4$ is not a $2^{e}$th power in $\Z_p$.  The OOCs are cosets of the fourth roots of unity in $\Z_p$.
    \item $(2p, 4, 2, 1)$-OOCs for prime $p \equiv 1 \pmod{4}$.
\end{itemize}

In \cite{BurMomPas}, Buratti et al. built on the work of \cite{MomBuratti} and gave nonexistence results for infinitely many values of $v$, and some constructions of $(v,4, 2, 1)$-OOCs.  Some of these results are obtained through case-by-case analysis of the number of distinct internal differences, and some of the constructions take the same approaches as \cite{MomBuratti}.  The constructions of \cite{WangChang} by Wang and Chang also take the same approach.

In \cite{BurPasWu} Buratti et al. constructed $(v,5,2,1)$-OOCs, again with a similar approach, with dependencies on the Theorem of Weil on multiplicative character sums, and graph labellings. 

We note that in the derivation of upper bounds and the constructions of OOCs with $\lambda_c=1$, much depended on the fact sets of internal differences are disjoint if $\lambda_c=1$. 

\subsubsection{Constructions using binary sequences}
It can be shown (see Lemma \ref{lem:lcbound}) that a lower bound for $\lambda_c$ is given by $\frac{w^2}{v}$.  In \cite{HuczynskaNg}, the current authors used binary sequences to construct new families of OOCs with this minimum $\lambda_c$.

\begin{theorem}
\label{thm:lcbound-eq} 
\begin{enumerate}
    \item[(a)] \cite[Theorem 3.1]{HuczynskaNg} If $v|w^2$ and $w|v$ there is an infinite family of $(v,w,\lambda_a, \lambda_c)$-OOC $\{X, Y\}$, with $\lambda_a=w$ and $\lambda_c=\frac{w^2}{v}$: 
    \begin{align*}
    X &= \overbrace{11\ldots1}^w \overbrace{00\ldots0}^w \ldots \overbrace{00\cdots0}^w, \\
    Y &= \underbrace{\underbrace{1\ldots1}_{\lambda_c}0\ldots0}_w \underbrace{\underbrace{1\ldots1}_{\lambda_c}0\ldots0}_w \ldots \underbrace{\underbrace{1\ldots1}_{\lambda_c}0\ldots0}_w.
    \end{align*}
    Written as subsets of $\Z_v$:
    \begin{align*}
        X & = \{0, 1, 2, \ldots, w-1\}, \\
        Y & = \left\{\delta w, \delta w+1, \ldots, \delta w+ \lambda_c-1 \; | \; \delta = 0, 1, \ldots, \left(\frac{v}{w}-1\right) \right\}.
    \end{align*}
    \item[(b)] \cite[Theorem 4.2]{HuczynskaNg} Appending a zero to both $X$ and $Y$ of the OOC above will give a $(v+1, w, \lambda_a, \lambda_c)$-OOC with $\lambda_c = \lceil w^2/(v+1) \rceil$ and $\lambda_a = w-1$.
    
    \item [(c)] \cite[Theorem 3.4]{HuczynskaNg} Let $N>1$.  There exists a $(2^N, 2^{N-1}, 2^{N-1}, 2^{N-2})$-OOC $\{X_1, X_2, \ldots, X_N\}$ of size $N$, where
$X_i=(x_t)_{t=0}^{v-1}$ is defined as follows:
\begin{align*}
    & x_0 = \cdots =x_{2^{i-1}-1} = 1, \\
    & x_{2^{i-1}} = \cdots = x_{2^{i}-1} = 0, \\
    & x_t = x_{t+2^i}, t \ge 2^i.
\end{align*}
So for each $X_i$ we have:
$$X_i = \underbrace{1\ldots1}_{2^{i-1}}\underbrace{0\ldots0}_{2^{i-1}} \underbrace{11\ldots100\ldots0}_{2^i} \ldots \underbrace{11\ldots100\ldots0}_{2^i}. $$

\end{enumerate}
\end{theorem}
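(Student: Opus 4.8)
The plan is to verify all three constructions directly; in each case the relevant point‑sets are built either from a cyclic subgroup of $\Z_v$ (part (a)), from a perturbation of such a set (part (b)), or from a short periodically repeated block (part (c)), so that the internal and external difference multiplicities can be read off from this structure rather than computed difference by difference. For part (a), $|X|=w$ is immediate, and $|Y|=(v/w)\lambda_c=w$ since $\lambda_c=w^2/v$. Put $H=\langle w\rangle\le\Z_v$; as $w\mid v$ we have $|H|=v/w$, and since $\lambda_c\le w$ the set $Y=\bigcup_{j=0}^{\lambda_c-1}(j+H)$ is a disjoint union of $\lambda_c$ cosets of $H$. Hence $Y+w=Y$, so $|Y\cap(Y+w)|=|Y|=w$ and $\lambda_a=w$ is attained, while every other auto‑correlation value is at most $w$ trivially. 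For the cross‑correlation, $X=\{0,1,\dots,w-1\}$ is a transversal of $H$ in $\Z_v$, so it meets each coset of $H$ in exactly one point; as $Y+\delta$ is again a union of $\lambda_c$ cosets of $H$ for every $\delta\in\Z_v$, we get $|X\cap(Y+\delta)|=\lambda_c$ for all $\delta$, so $\lambda_c=w^2/v$ and this value is attained.

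For part (b), write $X'$ and $Y'$ for the sequences obtained by appending a single $0$, viewed now as subsets of $\Z_{v+1}$ (the underlying point‑sets are unchanged; only the modulus grows from $v$ to $v+1$). The weights stay $w$. Appending zeros creates no new collisions, so $|X'\cap(Y'+\delta)|$ is at most the old value $w^2/v$; since $0<w^2/v-w^2/(v+1)=w^2/(v(v+1))<1$ and $w^2/v$ is an integer, $\lceil w^2/(v+1)\rceil=w^2/v$, and this value is still attained, e.g. at $\delta=0$. One then argues no shift in $\Z_{v+1}$ exceeds $w^2/v$: a window of length $w$ sliding over $Y'$ — which consists of $v/w$ blocks of $\lambda_c$ consecutive points separated by gaps of length $w-\lambda_c$, together with one longer gap of length $w-\lambda_c+1$ created by the appended position — can never enclose points of more than one full block plus partial overlaps summing to at most $\lambda_c$. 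For the auto‑correlation, the relation $Y+w=Y$ valid in $\Z_v$ now fails; a direct count at the ``seam'' gives $|Y'\cap(Y'+w)|=w-1$, so $\lambda_a=w-1$ is attained, and one checks every nonzero shift gives at most $w-1$ (for $Y'$ because the appended zero produces a uniquely longest run of zeros, so $Y'$ has no nontrivial cyclic symmetry and no auto‑correlation value can reach $w=|Y'|$; for $X'$ directly, as an interval of length $w<v+1$).

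For part (c), $X_i$ is the $2^{N-i}$‑fold concatenation of the block $\underbrace{1\cdots1}_{2^{i-1}}\underbrace{0\cdots0}_{2^{i-1}}$, so it has weight $2^{N-i}\cdot 2^{i-1}=2^{N-1}=w$ and, viewed in $\Z_{2^N}$, is periodic of period $2^i$. For $i<N$ this periodicity is nontrivial and yields $|X_i\cap(X_i+2^i)|=2^{N-1}$, so $\lambda_a=2^{N-1}$ is attained (and every auto‑correlation value is at most $|X_i|=2^{N-1}$). For the cross‑correlation with $i<j$, both $X_i$ and $X_j+\delta$ have period $2^j$, so $|X_i\cap(X_j+\delta)|$ equals $2^{N-j}$ times the number of collisions inside one window of length $2^j$; in such a window $X_j$ is the interval $\{0,\dots,2^{j-1}-1\}$, while $X_i$ is periodic of period $2^i$ with exactly $2^{i-1}$ points per period, and since $2^i\mid 2^{j-1}$ every interval of length $2^{j-1}$ contains exactly $2^{j-2}$ such points. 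Hence $|X_i\cap(X_j+\delta)|=2^{N-j}\cdot 2^{j-2}=2^{N-2}=\lambda_c$ for every $\delta$, so the bound holds and is attained.

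Parts (a) and (c) are essentially forced once the coset / period structure is exhibited; the genuine work sits in part (b), where one must confirm simultaneously that appending a single zero never lets a shifted copy of $Y'$ pick up an extra block‑point — so $\lambda_c$ stays equal to $\lceil w^2/(v+1)\rceil$ — and that it destroys every nontrivial cyclic symmetry of $Y'$, so $\lambda_a$ drops exactly to $w-1$ rather than remaining $w$. Both points reduce to a careful accounting of the block‑and‑gap pattern of $Y$ around the appended seam, and this seam analysis is the main obstacle.
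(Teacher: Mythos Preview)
The paper does not actually prove this theorem: each of parts (a), (b), (c) is simply quoted with a citation to \cite{HuczynskaNg} (Theorems 3.1, 4.2 and 3.4 there), and no argument is supplied in the present paper. So there is no in-paper proof to compare your proposal against.

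That said, your proposal is correct. Parts (a) and (c) are clean and complete: the coset/transversal description of $X$ and $Y$ in (a) makes both correlation values immediate, and the period-reduction in (c), together with the fact that any interval of length $2^{j-1}$ contains exactly $2^{j-2}$ points of a period-$2^i$ pattern when $i\le j-1$, gives the exact cross-correlation in one line. Your treatment of (b) is right in substance but the cross-correlation step is only sketched. The sentence ``appending zeros creates no new collisions'' is not itself an argument, since shifts in $\Z_{v+1}$ are genuinely different from shifts in $\Z_v$; what carries the weight is the sliding-window claim you state afterwards. A clean way to make that step rigorous is to list the $w$ ones of $Y'$ in cyclic order as $y_0<y_1<\cdots<y_{w-1}$ and observe that the cyclic gap $y_{k+\lambda_c}-y_k$ equals $w$ for every $k$ except across the appended seam, where it equals $w+1$; hence no cyclic interval of length $w$ in $\Z_{v+1}$ can contain $\lambda_c+1$ consecutive $y_k$'s, and therefore contains at most $\lambda_c$ points of $Y'$. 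Your auto-correlation argument for (b) --- the appended zero creates a uniquely longest $0$-run, so $Y'$ acquires full cyclic order and its auto-correlation drops strictly below $w$, while the explicit count at shift $w$ gives $w-1$ --- is correct as stated.
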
 

\begin{example} \label{eg:lcbound-eq}
\begin{itemize}
\item[(i)] For (a), $\{111000000, 100100100\}$ is a $(9,3,3,1)$-OOC.  
\item[(ii)] For (b), $\{1110000000, 1001001000\}$ is a $(10,3,2,1)$-OOC.
\item[(iii)] For (c), $\{11110000, 11001100, 10101010\}$ is an $(8,4,4,2)$-OOC. 
\end{itemize}
\end{example}

\subsubsection{Computer searches}
In \cite{ZhangPengYang}, $(v,4,2,1)$-OOCs were obtained via a computer search. A graph is constructed with vertex set codewords satisfying the auto-correlation constraint, and edges are added between vertices/codewords satisfying the cross-correlation constraint.  The search for a maximal OOC is thus transformed to the maximum clique problem.    The computer search of Baicheva and Topalova in \cite{BaichevaTopalova} is equivalent to this search, where largest possible $(v,4,2,1)$-OOCs were found for $v \le 181$. Another computer search \cite{BaiTop2023} gives  maximal size of $(v,6,2,1)$- and $(v,7,2,1)$-OOCs for $v \le 165$ and $v \le 153$, and upper bounds for larger $v$.

In \cite{Li}, Li gave an algorithm for constructing general $(v,w, \lambda_a, \lambda_c)$-OOC via block designs.  Codewords are produced from difference sets, and then more codewords are produced and accepted if the cross-correlation values satisfy the constraints.  

\subsubsection{Recursive constructions}

In \cite[Theorem 5.3]{MomBuratti}, a $(gmn, 4, 2, 1)$-OOC is constructed from a $(gm, 4, 2, 1)$-OOC and a $(gn, 4, 2, 1)$-OOC using difference matrices associated with a class of graphs called kites.

The construction in \cite[Theorem 5.1]{BurMomPas}  combines two OOCs and depends on a graph defined on units of $\Z_v$, akin to a Cayley graph, and the existence of a perfect matching. 

\subsubsection{Some constructions with $\lambda_a=\lambda_c >1$}

We look briefly at some construction techniques which result in OOCs with $\lambda_a=\lambda_c$, but paying particular attention to those with values greater than 1.

There are many constructions using objects in finite geometry in this class of construction.  For example, Alderson and Mellinger \cite{AldersonMellinger} constructed  $(\frac{q^{k+2}-1}{q-1}, q, k, k)$-OOCs from $m$-arcs in $PG(k,q)$, a generalisation of the $(q^3+q^2+q+1, q+1, 2, 2)$-OOCs constructed from conics in $PG(2,q)$ by Miyamoto et. al \cite{MiyMizShi}.  Chung et. al \cite{ChuSalWei} constructed  $(\frac{q^{d+1}-1}{q-1}, \frac{q^{s+1}-1}{q-1}, \frac{q^{s}-1}{q-1},\frac{q^{s}-1}{q-1})$-OOCs from $s$-dimensional subspaces in $PG(d,q)$, analogous to the affine version of $(q^a-1, q^d, q^{d-1}, q^{d-1})$-OOCs from $d$-flats in $EG(a, q)$. There are also $(\frac{q^{k(d+2)}-1}{q^k-1},q-1, d, d)$-OOCs from $(q-1)$-arcs in Baer subplaces, and sublines of $PG(2,q)$ in $PG(2, q^k)$, for $k$, $d >1$, in \cite{AldMel2}.

As above, number theory and cyclotomy also yield constructions here.  For example, Chung and Kumar \cite{ChungKumar} constructed $(p^{2m}-1, p^m+1, 2, 2)$-OOCs from the discrete logarithm of solutions to a degree $p+1$ equation in $\F_{p^{2m}}$, and Ding and Xing \cite{DingXing} constructed 
 $(2^m-1, w, 2,2)$-OOCs from cyclotomic classes.

\section{Relationship with other combinatorial objects}
\label{sec:others}

\subsection{Constant weight codes and cyclically permutable codes}
\label{sub:codes}

 There is a natural connection between non-degenerate OOCs and constant weight codes and cyclically permutable codes.  We clarify the relationships here, especially between $\lambda_a$, $\lambda_c$ and the weight/distance of these codes.  
 
 \begin{definition}[Constant weight cyclically permutable code \cite{Gilbert, BitanEtzion, NguyenGyorfiMassey, Neumann}]
A (binary) constant weight cyclically permutable code (CW-CPC)
of length $v$, weight $w$, minimum Hamming distance $d_H$, is a set of
binary $v$-tuples (codewords), each of weight $w$, such that $d_H$ is the smallest of the Hamming distance between  pairs of codewords, and
\begin{enumerate}
\item[(a)] all codewords are cyclically distinct, that is, no codeword can be obtained from another by cyclic shifts;
\item[(b)] each codeword is of full cyclic order, that is, a codeword is not a cyclic shift of itself unless it is a shift of $n$ (and multiples of
$n$) positions.
\end{enumerate}
We write this as $\CW(v, w, d_H)$.
\end{definition}

Note that here Hamming distance between two codewords is the number of places where the two codewords differ.  In \cite{Gilbert, Neumann, NguyenGyorfiMassey} another type of distance associated with cyclically permutable codes is defined:  

\begin{definition}[Cyclic minimum distance \cite{NguyenGyorfiMassey}]
The cyclic minimum distance $d_c$ of a cyclically permutable code is the minimum Hamming distance from a codeword to one of its own distinct shift or to some cyclic shift of another codeword.
\end{definition}

For constant weight codes, $d_c$ and $d_H$ must be even, and $d_c \le d_H$.  By treating each sequence of a $(v,w, \lambda_a, \lambda_c)$-$\OOC$ as a binary codeword, an $\OOC$ can be regarded as a CW-CPC, and vice versa. The next theorem then follows.

\begin{theorem} \label{thm:OOC-CWCPC}
\begin{enumerate}
    \item[(a)] A non-degenerate proper $(v,w, \lambda_a, \lambda_c)$-$\OOC$ gives rise to a $\CW(v,w,d_H)$ with cyclic minimum distance $d_c$, where $d_H \le 2(w-\lambda_c)$ and $d_c \le 2(w-\lambda)$, $\lambda=\max\{\lambda_a, \lambda_c\}$.
    \item[(b)] A $\CW(v, w, d_H)$ with minimum cyclic distance $d_c$ gives rise to a $(v, w, \lambda_c, \lambda_a)$-$\OOC$ with $\lambda_a, \lambda_c \le  w-\frac{d_c}{2}$, with equality for $\lambda_a$ or $\lambda_c$.
\end{enumerate}
\end{theorem}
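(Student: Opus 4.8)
The plan is to run everything through the elementary dictionary between Hamming distance and collisions: for binary $v$-tuples $X=(x_t)$ and $Y=(y_t)$ of weight $w$ and any $\delta\in\Z_v$, the shift $Y+\delta$ differs from $X$ in exactly $2\bigl(w-\sum_t x_t y_{t+\delta}\bigr)$ positions, and in particular $d_H(X,X+\delta)=2\bigl(w-\sum_t x_t x_{t+\delta}\bigr)$ for $\delta\neq0$. Consequently the correlation conditions (A), (B) are exactly lower bounds on the various (cyclic) pairwise distances, and the properness hypotheses say precisely that these bounds are attained.

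For part (a): the hypothesis that the OOC is non-degenerate is, word for word, the pair of axioms defining a $\CW$ (codewords cyclically distinct; each of full cyclic order), so $\code$ is a $\CW(v,w,d_H)$, where $d_H$ is the minimum Hamming distance among the listed codewords, with cyclic minimum distance $d_c$ the minimum of the distances $d_H(X,X+\delta)$ ($\delta\neq0$) and $d_H(X_i,X_j+\delta)$ ($i\neq j$, all $\delta$). Applying the dictionary with $\delta=0$ to condition (B) gives $d_H(X_i,X_j)\ge 2(w-\lambda_c)$ for every pair, hence $d_H\ge 2(w-\lambda_c)$; conversely c-properness produces a pair $X,Y\in\code$ and a shift $\delta$ with $\sum_t x_t y_{t+\delta}=\lambda_c$, and after replacing $Y$ by the shift $Y+\delta$ (an operation under which $v,w,\lambda_a,\lambda_c$ and non-degeneracy are unchanged) two codewords sit at distance exactly $2(w-\lambda_c)$, so $d_H\le 2(w-\lambda_c)$ (in fact equality). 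For $d_c$, conditions (A) and (B) give $d_c\ge 2(w-\lambda)$ with $\lambda=\max\{\lambda_a,\lambda_c\}$, while a-properness exhibits some $X$ and $\delta\neq0$ with $d_H(X,X+\delta)=2(w-\lambda_a)$ and c-properness exhibits a pair with $d_H(\cdot,\cdot+\delta)=2(w-\lambda_c)$; since one of these two distances equals $2(w-\lambda)$ and both lie among those defining $d_c$, we get $d_c\le 2(w-\lambda)$.

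For part (b): the $\CW$ axioms (a), (b) are again precisely non-degeneracy, so the codeword set is a non-degenerate OOC once we take $\lambda_a$ to be the largest auto-correlation value at a nonzero shift and $\lambda_c$ the largest cross-correlation value at any shift. Writing each of these correlation values as $w-\frac12 d_H(X,X+\delta)$ or $w-\frac12 d_H(X_i,X_j+\delta)$, and observing that every such distance is one of the distances over which the minimum $d_c$ is taken, gives $\lambda_a\le w-d_c/2$ and $\lambda_c\le w-d_c/2$ simultaneously. Finally $d_c$ is attained by one of those distances: if by an auto-distance then $\lambda_a=w-d_c/2$, and if by a cross-distance then $\lambda_c=w-d_c/2$, so equality holds for at least one of $\lambda_a,\lambda_c$.

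The bookkeeping is routine once the distance/collision dictionary is fixed; the point needing care is the orientation of the inequalities in (a). The correlation constraints alone yield only the lower bounds $d_H\ge 2(w-\lambda_c)$ and $d_c\ge 2(w-\lambda)$, so the stated upper bounds genuinely require properness together with, in the case of $d_H$, the licence --- implicit in ``gives rise to'' --- to replace a codeword by a cyclic shift of itself. I would also isolate the degenerate case $N=1$, where no pair of distinct codewords exists and only the $d_c$ (equivalently $\lambda_a$) half of each statement has content, rather than let it pass silently.
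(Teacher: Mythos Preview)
Your proof is correct and follows essentially the same approach as the paper: both arguments use the identity $d_H(X,Y+\delta)=2\bigl(w-\sum_t x_t y_{t+\delta}\bigr)$ to translate correlation bounds into distance bounds, and both obtain the upper bound on $d_H$ in part (a) by replacing one codeword $Y$ by its shift $Y+\delta$ so that a pair at distance exactly $2(w-\lambda_c)$ is present. Your version is slightly more explicit in stating the dictionary up front and in flagging the $N=1$ edge case, but the substance is identical.
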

\begin{proof}
\begin{enumerate}
    \item[(a)] Suppose $\code$ is a non-degenerate proper $(v,w,\lambda_a, \lambda_c)$-$\OOC$. Since $\code$ is proper there exist $X$, $Y \in \code$ with cross-correlation value $\lambda_c$ for some shift of $Y$, say $Y+s$.  Then $\code'=\code \setminus \{Y\} \cup \{Y+s\}$ is our $\CW$ with the specified parameters, since for every pair $X', Y' \in \code'$, there are at most $\lambda_c$ positions where $X'$ and $Y'$ both have a 1, and therefore at least $2(w-\lambda_c)$ places where they differ, so the minimum Hamming distance of $\code'$ is $d_H=2(w-\lambda_c)$.  

    For cyclic distance, similar arguments imply that the distance between a codeword and its own shift is at least $2(w-\lambda_a)$ (attained for some codeword and its shift since $\code$ is proper), and between a codeword and a shift of another codeword is at least $2(w-\lambda_c)$. Hence $d_c = 2(w-\lambda)$, $\lambda=\max\{\lambda_a, \lambda_c\}$.
    \item[(b)] If we take two codewords $X$, $Y$ of the $\CW$,  any shift of $Y$ collides with $X$ in at most $w-\frac{d_c}{2}$ places. So $\lambda_c \le w-\frac{d_c}{2}$.  Similarly, $X$ collides with any of its own shifts in at most $w-\frac{d_c}{2}$ places, so $\lambda_a \le w-\frac{d_c}{2}$.  By definition $d_c$ must be attained, so equality is achieved by either $\lambda_a$ or $\lambda_c$.  
\end{enumerate}    
\end{proof}

The well-known upper bound - the Johnson bound (quoted in Theorem \ref{thm:Johnson}) - is a consequence of identifying a proper,
non-degenerate $(v,w,\lambda_a, \lambda_c)$-OOC $\code$ with a
constant weight code: Treat the $\OOC$ as a $\CW$, and take cyclic closure of all the codewords, that is, taking all the cyclic shifts of the
codewords, to obtain a constant weight code $\code'$ with $v |\code|$
codewords and minimum distance $2w-2 \lambda$ with $\lambda =
\max\{\lambda_a, \lambda_c\}$.  Then the Johnson bound on constant
weight codes applies and we obtain an upper bound for the largest
possible size of $(v,w, \lambda, \lambda)$-OOCs.

For precision we note here that the $(v,w,\lambda_a, \lambda_c)$-OOC
has to be a proper, non-degenerate OOC:
the conclusion that $\code'$ has $v |\code|$ codewords assumes
that the OOC is non-degenerate, and the
conclusion that $\code'$ has minimum distance $2(w-\lambda)$
assumes that the OOC is proper.  

We summarise this in a lemma:

\begin{lemma} \label{lem:OOC-CW}
A non-degenerate proper $(v,w, \lambda_a, \lambda_c)$-OOC gives rise to 
a constant weight code of length $v$, weight $w$ and minimum distance
$2w-2 \lambda$, where $\lambda = \max\{\lambda_a, \lambda_c\}$.
\end{lemma}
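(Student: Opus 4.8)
The plan is to make rigorous the cyclic-closure argument sketched in the paragraph preceding the lemma. Start with a non-degenerate proper $(v,w,\lambda_a,\lambda_c)$-OOC $\code$, regarded as a family of binary $v$-tuples of constant weight $w$, and define $\code'$ to be the set of all cyclic shifts $X+s$, $X \in \code$, $s \in \Z_v$. First I would check that $|\code'| = v|\code|$: full cyclic order of each sequence guarantees that the $v$ shifts of a single $X$ are pairwise distinct, and the condition that no sequence of $\code$ is a shift of another guarantees that the shift-orbits of distinct members of $\code$ are disjoint; both are exactly the hypotheses packaged into non-degeneracy.

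Next, I would compute the Hamming distance between two members of $\code'$. Writing supports as subsets of $\Z_v$, two weight-$w$ codewords with supports $A,B$ are at Hamming distance $|A \triangle B| = 2w - 2|A\cap B|$. A pair in $\code'$ is either two distinct shifts of a common $X\in\code$, whose supports satisfy $|Q \cap (Q+\delta)| \le \lambda_a$ for the relevant $\delta \ne 0$ by condition (A$'$), or shifts of two distinct members, with $|Q_i \cap (Q_j+\delta)| \le \lambda_c$ by condition (B$'$). In either case the distance is at least $2w - 2\lambda$ with $\lambda = \max\{\lambda_a,\lambda_c\}$, so $\code'$ is a constant weight code of length $v$, weight $w$ and minimum distance at least $2w-2\lambda$.

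Finally, properness upgrades this inequality to the equality claimed. Since $\code$ is a-proper, some $X\in\code$ attains $|Q\cap(Q+\delta)| = \lambda_a$, so $X$ and $X+\delta$ lie at distance exactly $2w-2\lambda_a$; since $\code$ is c-proper, some pair attains $|Q_i\cap(Q_j+\delta)| = \lambda_c$, giving distance exactly $2w-2\lambda_c$; the smaller of the two is $2w-2\lambda$. Alternatively, one can shortcut the middle steps by invoking Theorem \ref{thm:OOC-CWCPC}(a): it already produces a $\CW(v,w,d_H)$ with cyclic minimum distance $d_c = 2(w-\lambda)$, and forming the cyclic closure of a CW-CPC turns its cyclic minimum distance into the ordinary minimum distance of the resulting constant weight code, so $\code'$ has minimum distance $d_c = 2w-2\lambda$.

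I expect the only real obstacle to be bookkeeping around degenerate and boundary behaviour: one must use \emph{both} halves of non-degeneracy to guarantee $\code'$ genuinely has $v|\code|$ distinct codewords, and \emph{both} halves of properness to guarantee the minimum distance is attained rather than merely bounded. The case $|\code|=1$ (where the cross-correlation pairs are vacuous, so effectively $\lambda = \lambda_a$) should be noted explicitly, but it goes through since the single shift-orbit already supplies a pair realising distance $2w-2\lambda_a$.
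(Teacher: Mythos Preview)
Your proposal is correct and matches the paper's approach exactly: the lemma in the paper is stated without a separate proof, since it is explicitly presented as a summary of the cyclic-closure argument in the paragraph immediately preceding it, and your write-up is a careful elaboration of precisely that argument (cyclic closure, non-degeneracy for the codeword count, correlation bounds for the distance inequality, properness for attainment). Your alternative shortcut via Theorem~\ref{thm:OOC-CWCPC}(a) is also in line with the paper, which treats the OOC as a CW-CPC before passing to the closure.
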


We leave the reader with an example.

\begin{example} \label{eg:OOC-CWCPC}
Consider the set of two sequences $\{11100010010, \, 00011101001\}$.
This is a proper, non-degenerate $\OOC$ with length $v=11$, weight $w=5$, and $\lambda_a=2$, $\lambda_c=3$. 
Taking the sequences as codewords this gives a $\CW$ with cyclic minimum distance $d_c=2(w-\lambda_c)=4$.  Taking cyclic closure of the codewords we have a constant weight code with minimum distance $d_H=4$.
\end{example}

\subsection{Conflict avoiding codes and other codes}
\label{sub:CAC}

Conflict-avoiding codes (CACs) are defined, for example, in  \cite{MomCAC, ShumWongChen},  as follows:

\begin{definition}
A set $\code = \{X_0, X_1, \ldots, X_{N-1} \}$ of $N$ 
binary sequences of weight $w$ and length $v$ is called a 
conflict-avoiding code (CAC) if $\Delta(X_i) \cap \Delta(X_j)=\emptyset$ for all $i \neq j$, $X_i, X_j \in \code$.
\end{definition}

Recall the notation introduced at the end of Section \ref{sub:OOCsubsets}: $\Delta(X)$ refers to the multiset of internal differences in $X$.

CACs can be thought of as OOCs with $\lambda_c=1$ (Theorem \ref{thm:intersection}) and no consideration of the auto-correlation value $\lambda_a$, that is, a CAC is a $(v, w, w, 1)$-OOC, which is c-proper but not necessarily a-proper.  This also means that, since the auto-correlation value is not a constraint, one can obtain larger sets of CACs than OOCs with $\lambda_c=1$.  For example, an upper bound for the size of an  $(v, 4, 2, 1)$-OOC is $\lceil v/8 \rceil$ (\cite{BurMomPas}) while one for a CAC of length $v$, weight $4$, is $(v-1)/6$ (\cite{ShumWongChen}, subject to some number theoretic constraints).   

Another class of sequences that can be thought of as OOCs is the impulse radio sequences (IRSs) \cite{IRS}: these are binary sequences that satisfy the same auto- and cross-correlation properties as OOCs, with the addition of the condition that the 1s of the sequences are places in a particular way, satisfying the ``pulse position property": for a sequence $X=(x_i)$ of length $km$, the 1s are in positions $\{a_j + jm: j \in \{0, \ldots, k-1\} \}$.  So the sequence is divided into k blocks of length m, and there is a 1 in each block.  An IRS of length $km$ has weight $k$ and can be thought of as a $(km, k, \lambda_a, \lambda_c)$-OOC.  In \cite{IRS} the authors studied IRSs with $\lambda_c=k-1$, which is maximum for sequences with full cyclic order anyway, and therefore their effort focuses on $\lambda_a$.  They obtained the values of maximum sizes of IRS for parameters $k=3, 4$, and the technique used is that of considering families of distinct representatives of orbits of $k$-subsets of $\Z_{km}$ under the action of $\Z_{km}$, similar to that mentioned at the end of Section \ref{sub:bounds-on-size}.

\subsection{Difference families and variants}
\label{sub:diff}

Earlier, it was shown how OOCs can be equivalently expressed in terms of multisets of differences between subsets of $\mathbb{Z}_n$.  Difference families and their variants in $\mathbb{Z}_v$ have a natural link to OOCs in the case when $\lambda_a=\lambda_c=\lambda=1$.  So far, this link does not seem to have been explored for the situations when $\lambda>1$ or $\lambda_a \neq \lambda_c$.  This is partly because there is not such a simple connection.  We examine some of these connections and some possible generalisations here.

\begin{definition}\label{defn:df}
Let $G$ be a group of order $v$.  A collection $\{Q_0, \ldots, Q_{N-1}\}$ of $w$-subsets of $G$ forms a $(v,w,\lambda)$-difference family (DF) if every nonidentity element of $G$ occurs $\lambda$ times in $\cup_{i=0}^{N-1} \Delta(Q_i)$.  If $\{Q_0, \ldots, Q_{N-1}\}$ are pairwise disjoint then this is called a disjoint difference family.
\end{definition}

Note that the existence of a $(v, w, \lambda)$-DF necessarily implies that $\lambda (v-1) = N w (w-1)$, that is, $N=\frac{\lambda (v-1)}{w (w-1)}$.  A $(v,w,1)$-DF in $\mathbb{Z}_v$ may be regarded as a $(v,w,1,1)$-OOC of size $N=\frac{(v-1)}{w (w-1)}$.  This is observed, for example, in \cite{Bur}.  To see this, the following lemma is required.

\begin{lemma} \label{lem:disjoint}
Let $B=\{Q_0, \ldots, Q_{N-1}\}$ be the subsets of $\Z_v$ corresponding to a $(v,w,\lambda_a, \lambda_c)$-$\OOC$.  Then $\lambda_c=1$ if and only if the multisets $\{ \Delta(Q_0), \ldots \Delta(Q_{N-1})\}$ are pairwise disjoint.
\end{lemma}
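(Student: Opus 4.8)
The plan is to reduce everything to the difference-counting dictionary set up in Section~\ref{sub:OOCsubsets}: the multiplicity of $\delta$ in the external-difference multiset $\Delta(Q_i,Q_j)$ equals $|Q_i\cap(Q_j+\delta)|$, so condition (B$'$) says precisely that no such multiplicity exceeds $\lambda_c$. The whole content of the lemma is then the observation that a \emph{repetition} in some $\Delta(Q_i,Q_j)$ is the same phenomenon as a \emph{shared element} between the internal-difference multisets $\Delta(Q_i)$ and $\Delta(Q_j)$. Making this correspondence precise (and verifying it is a genuine equivalence) is the one step that needs care; everything else is immediate.

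Concretely, I would first prove the following elementary equivalence for a fixed pair $i\neq j$: there exists $\delta\in\Z_v$ with $|Q_i\cap(Q_j+\delta)|\ge 2$ if and only if $\Delta(Q_i)$ and $\Delta(Q_j)$ have a common element. For the forward direction, two distinct ordered pairs $(x,y),(x',y')\in Q_i\times Q_j$ with $x-y=x'-y'=\delta$ must have $x\neq x'$ (and hence $y\neq y'$, since $x-y=x'-y'$), and rearranging gives a nonzero element $\epsilon=x-x'=y-y'$ lying in both $\Delta(Q_i)$ and $\Delta(Q_j)$. Conversely, a common element $\epsilon$ of $\Delta(Q_i)$ and $\Delta(Q_j)$ arises from $x,x'\in Q_i$ with $x-x'=\epsilon$ and $y,y'\in Q_j$ with $y-y'=\epsilon$; then $(x,y)$ and $(x',y')$ are distinct (as $x\neq x'$) and satisfy $x-y=x'-y'$, so that difference is attained at least twice, i.e. $|Q_i\cap(Q_j+\delta)|\ge 2$ for the relevant $\delta$. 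The only subtlety here is the bookkeeping that distinguishes ``distinct ordered pairs'' from ``distinct difference values'', which is exactly what the two implications above pin down; I expect this to be the main (and only) obstacle, and it is a mild one.

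Granting this equivalence, both directions of the lemma follow at once. If $\lambda_c=1$, then by (B$'$) every $|Q_i\cap(Q_j+\delta)|\le 1$, so no $\Delta(Q_i,Q_j)$ has a repeated element, hence $\Delta(Q_i)\cap\Delta(Q_j)=\emptyset$ for all $i\neq j$. Conversely, if the multisets $\Delta(Q_0),\ldots,\Delta(Q_{N-1})$ are pairwise disjoint, the equivalence gives $|Q_i\cap(Q_j+\delta)|\le 1$ for all $i\neq j$ and all $\delta$, so all cross-correlation values are at most $1$; since an OOC with $N\ge 2$ has $\lambda_c\ge 1$ (Remark~\ref{rem:N=1}), this forces $\lambda_c=1$. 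I would add a one-line remark that for $N=1$ the statement is degenerate (the pairwise condition is vacuous and $\lambda_c=0$ by convention), so the claim is understood with $N\ge 2$; flagging this hypothesis is what lets one upgrade ``at most $1$'' to ``equals $1$''.
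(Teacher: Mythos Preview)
Your argument is correct and is essentially the same as the paper's: both hinge on the rearrangement $x-x'=y-y'\iff x-y=x'-y'$ to show that a common element of $\Delta(Q_i)$ and $\Delta(Q_j)$ is equivalent to a repeated value in $\Delta(Q_i,Q_j)$. The paper proves only the forward direction explicitly and declares the converse ``immediate'', whereas you spell out both directions and flag the $N\ge 2$ edge case; this extra care is welcome but does not constitute a different approach.
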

\begin{proof}

Let $\lambda_c=1$.  Suppose there is some $\delta \in \mathbb{Z}_v^*$ such that $\delta \in \Delta(Q_i) \cap \Delta(Q_j)$ where $i \neq j$.  So there exist distinct $x_1,x_2 \in Q_i$ and distinct $y_1, y_2 \in Q_j$ such that $\delta=x_1-x_2=y_1-y_2$.  Rearranging, $x_1-y_1=x_2-y_2(=\beta)$ say, so there are two distinct expressions for $\beta \in \mathbb{Z}_v^*$ in $\Delta(Q_i,Q_j)$, contradicting $\lambda_c=1$.  The converse is immediate.
\end{proof}

It is then straightforward to show:
\begin{proposition} \label{prop:df-OOC}
If $B=\{Q_0, \ldots, Q_{N-1}\}$ is a $(v,w,1)$-DF in $\mathbb{Z}_v$, then $B$ is a $(v,w,1,1)$-$\OOC$ of size $N=\frac{v-1}{w(w-1)}$.
\end{proposition}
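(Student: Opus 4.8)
The plan is to regard $B$ as a family of $w$-subsets of $\Z_v$ and verify the subset conditions (A$'$) and (B$'$) of Section \ref{sub:OOCsubsets} with $\lambda_a=\lambda_c=1$, reading off the size from the standard counting identity for difference families. First I would dispose of the size: by Definition \ref{defn:df} with $\lambda=1$, the parameters of a $(v,w,1)$-DF satisfy $v-1 = Nw(w-1)$, so $N = \frac{v-1}{w(w-1)}$. In particular $w\ge 2$ (else $v=1$, contradicting $v\ge 2$), so each $\Delta(Q_i)$ is a nonempty multiset and, since a repeated block would make the $\Delta(Q_i)$ coincide rather than be disjoint, the $Q_i$ are genuinely distinct.

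Next I would check the auto-correlation bound. Because $B$ is a $(v,w,1)$-DF, every nonidentity element of $\Z_v$ occurs \emph{exactly once} in the multiset union $\bigcup_i \Delta(Q_i)$, hence at most once in any individual $\Delta(Q_i)$. Using the dictionary recorded at the end of Section \ref{sub:OOCsubsets} (the multiplicity of $\delta$ in $\Delta(Q_i)$ equals $|Q_i\cap(Q_i+\delta)|$), this says precisely that $|Q_i\cap(Q_i+\delta)|\le 1$ for all $\delta\in\Z_v\setminus\{0\}$ and all $i$; together with Lemma \ref{lem:lambda_a} (since $w\ge 2$) this gives $\lambda_a=1$, so (A$'$) holds.

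For the cross-correlation bound, the key point is that the ``exactly once'' property forces the multisets $\Delta(Q_0),\ldots,\Delta(Q_{N-1})$ to be pairwise disjoint: if some $\delta$ lay in both $\Delta(Q_i)$ and $\Delta(Q_j)$ with $i\neq j$, it would have multiplicity at least $2$ in the union. Comparing sizes, $\bigl|\bigcup_i\Delta(Q_i)\bigr| = Nw(w-1) = v-1 = |\Z_v\setminus\{0\}|$, so these multisets in fact partition $\Z_v\setminus\{0\}$. I would then invoke Lemma \ref{lem:disjoint} to translate pairwise disjointness of the $\Delta(Q_i)$ into $|Q_i\cap(Q_j+\delta)|\le 1$ for all $\delta\in\Z_v$ and all $i\neq j$, i.e.\ $\lambda_c=1$, giving (B$'$). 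Hence $B$ satisfies (A$'$) and (B$'$) with $\lambda_a=\lambda_c=1$ and is a $(v,w,1,1)$-OOC of size $\frac{v-1}{w(w-1)}$.

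I do not expect any real obstacle: the argument is a direct translation via the difference/intersection dictionary. The only point needing a word of care is that Lemma \ref{lem:disjoint} is stated for subsets ``corresponding to an OOC''; one should note that its proof uses only the correspondence between difference multisets and intersection sizes, never the OOC axioms, so it applies verbatim to an arbitrary family of $w$-subsets of $\Z_v$ — which is what lets us use it here before we have established that $B$ is an OOC.
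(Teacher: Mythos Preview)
Your proof is correct and follows essentially the same route as the paper's: deduce disjointness of the $\Delta(Q_i)$ from the frequency-one condition, invoke Lemma~\ref{lem:disjoint} for $\lambda_c=1$, observe that each $\delta$ occurs at most once in any individual $\Delta(Q_i)$ for $\lambda_a=1$, and read off $N$ from the counting identity. Your closing remark that Lemma~\ref{lem:disjoint} is really a statement about arbitrary families of $w$-subsets (not just OOCs) is a nice point of rigour that the paper leaves implicit.
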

\begin{proof}
By definition, the multiset $\cup_{i=0}^{N-1} \Delta(Q_i)$ contains every element of $\mathbb{Z}_v^*$ precisely once. Thus the $\Delta(Q_i)$ $(0 \leq i \leq N-1)$ are disjoint, and so by Lemma \ref{lem:disjoint} we have $\lambda_c=1$. Moreover any $\delta \in \mathbb{Z}_v^*$ occurs as a difference in each $\Delta(Q_i)$ at most once, so $\lambda_a=1$. Finally, the relation $\lambda (v-1)=N w(w-1)$ holds for $B$, and so $N=\frac{v-1}{w(w-1)}$.
\end{proof}

For the converse, the notion of a \emph{scarce difference family} is introduced in \cite{Yin} as follows: 
\begin{definition}\label{SDFdef}
A $(v,w,1)$-scarce difference family (SDF) is a collection $S$ of $w$-subsets of $\mathbb{Z}_v$, such that for every non-zero $x \in \mathbb{Z}_v$, the congruence $d_i-d_j \equiv x \pmod{v}$ has at most one solution pair $(d_i, d_j)$ with $d_i, d_j \in B$ for some $B \in S$.
\end{definition}

The following is immediate (see \cite{Yin}):
\begin{theorem} \label{thm:sdf-ooc}
A $(v,w,1)$-SDF with $N$ blocks is equivalent to a $(v,w,1,1)$-$\OOC$.
\end{theorem}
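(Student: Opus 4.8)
The plan is to show that, once an SDF and an OOC are both viewed as collections of $w$-subsets of $\Z_v$, the defining conditions of the two objects coincide, so that ``equivalent'' here means the underlying set systems are in bijective correspondence. First I would restate Definition \ref{SDFdef} in the internal-difference language introduced at the end of Section \ref{sub:OOCsubsets}: the requirement that $d_i - d_j \equiv x \pmod{v}$ have at most one solution pair $(d_i,d_j)$ with $d_i,d_j$ lying in a common block $B \in S$ is precisely the assertion that every nonzero $x \in \Z_v$ has multiplicity at most $1$ in the multiset $\bigcup_{B \in S} \Delta(B)$.

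Next I would split this single multiplicity statement into its two natural components. Restricting attention to differences arising inside one fixed block $B$ forces each $\delta \in \Z_v \setminus \{0\}$ to occur at most once in $\Delta(B)$; since the multiplicity of $\delta$ in $\Delta(B)$ equals $|B \cap (B+\delta)|$, this is exactly condition (A$'$) with $\lambda_a = 1$. On the other hand, no nonzero $x$ may be produced simultaneously from two distinct blocks, which says the multisets $\Delta(B)$, $B \in S$, are pairwise disjoint; by Lemma \ref{lem:disjoint} this is equivalent to $\lambda_c = 1$, i.e.\ to condition (B$'$). Conversely, a $(v,w,1,1)$-$\OOC$ $\{Q_0,\ldots,Q_{N-1}\}$ satisfies (A$'$) with $\lambda_a = 1$, so each $\Delta(Q_i)$ is a simple multiset, and (B$'$) with $\lambda_c = 1$, so by Lemma \ref{lem:disjoint} the $\Delta(Q_i)$ are pairwise disjoint; together these give that $\bigcup_i \Delta(Q_i)$ is simple, which is the SDF condition. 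Hence the map sending an SDF to the family of its blocks is a bijection onto the family of $(v,w,1,1)$-$\OOC$s with $N$ blocks, which proves the theorem.

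There is no serious obstacle here: the entire content is the careful unpacking of the nested quantifier ``for some $B \in S$'' in Definition \ref{SDFdef} so that it separates cleanly into the within-block statement (which governs $\lambda_a$) and the between-block statement (which governs $\lambda_c$, via Lemma \ref{lem:disjoint}). The only points that demand a little care are to invoke both directions of the ``if and only if'' in Lemma \ref{lem:disjoint}, and to observe that for $w \ge 2$ distinct blocks of an SDF are genuinely distinct subsets (equal blocks would have identical, hence non-disjoint, difference multisets), while for $w = 1$ both notions degenerate trivially; one should also note that a single-block SDF ($N=1$) corresponds to an OOC with $\lambda_c = 0 \le 1$, which is admissible under the parameter-inflation remark following Definition \ref{defn:proper}.
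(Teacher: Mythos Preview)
Your proposal is correct and is precisely the argument the paper has in mind: the paper declares the result ``immediate'' without proof, and the natural unpacking you give --- splitting the SDF multiplicity condition into the within-block part (yielding $\lambda_a=1$) and the between-block disjointness part (yielding $\lambda_c=1$ via Lemma~\ref{lem:disjoint}), then reversing --- is exactly the intended reasoning, mirroring the proof of Proposition~\ref{prop:df-OOC} just above.
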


One way to obtain an SDF is to take a relative difference family with frequency $1$.  Relative difference families were introduced in \cite{BurRDF} and defined as follows:
\begin{definition}[Relative difference family] \label{defn:RDF}
Let $G$ be a group of order $v$ with subgroup $H$ of order $n$. A $(v,n,w,\lambda)$-DF over $G$ relative to $N$ is a collection $\{Q_0,\ldots, Q_{N-1}\}$ of $w$-subsets of $G$ such that the multiset union $\cup_{i=0}^{N-1} \Delta(Q_i)$ comprises $\lambda$ copies of elements of $G \setminus H$ and zero copies of $H$.
\end{definition}

A $(v,n,w,1)$-DF over $\mathbb{Z}_v$ relative to any subgroup $H$ is a $(v,w,1)$-SDF and hence a $(v,w,1,1)$-$\OOC$ \cite{Yin}.  These have been much-studied and many constructions are known.

We propose generalising the definition of SDFs to larger $\lambda$ as follows. 

\begin{definition} \label{defn:sdf-lambda}
Let $B=\{Q_0, \ldots, Q_{N-1}\}$ be a collection of $w$-subsets of $\mathbb{Z}_v$.  Then we say $B$ is a $(v,w,\lambda)$-SDF if for each $x \in \mathbb{Z}_v^*$, there is at most one $i \in \{0,\ldots, N-1\}$ such that $x \in \Delta(Q_i)$, and if $x \in \Delta(Q_i)$ then $x$ occurs at most $\lambda$ times in $\Delta(Q_i)$. We call this a proper SDF if $\lambda$ is achieved by at least one $x$.
\end{definition}

\begin{theorem}
A (proper) $(v,w,\lambda)$-SDF with $N$ blocks is equivalent to a (proper) $(v,w,\lambda,1)$-$\OOC$.
\end{theorem}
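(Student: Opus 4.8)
The plan is to show that the defining conditions of a $(v,w,\lambda)$-SDF and of a $(v,w,\lambda,1)$-OOC (both on $N$ blocks) are literally the same conditions on a collection $\{Q_0,\ldots,Q_{N-1}\}$ of $w$-subsets of $\Z_v$, with the two notions of properness also corresponding; ``equivalent'' is then meant exactly as in Theorem~\ref{thm:sdf-ooc}. The two ingredients are Lemma~\ref{lem:disjoint}, which identifies $\lambda_c=1$ with pairwise disjointness of the multisets $\Delta(Q_i)$, and the observation recorded just after the definition of $\Delta$ that the multiplicity of $\delta$ in $\Delta(Q_i)$ equals $|Q_i\cap(Q_i+\delta)|$.

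First I would take a $(v,w,\lambda)$-SDF $B=\{Q_0,\ldots,Q_{N-1}\}$. The first clause of Definition~\ref{defn:sdf-lambda} says that each nonzero $\delta$ lies in at most one $\Delta(Q_i)$, i.e.\ the $\Delta(Q_i)$ are pairwise disjoint, so Lemma~\ref{lem:disjoint} gives $\lambda_c\le 1$, which is condition (B$'$). The second clause says that each nonzero $\delta$ has multiplicity at most $\lambda$ in every $\Delta(Q_i)$, which via the multiplicity formula is condition (A$'$), so $\lambda_a\le\lambda$. Hence $B$ is a $(v,w,\lambda,1)$-OOC of size $N$. Conversely, from a $(v,w,\lambda,1)$-OOC $B$, condition (B$'$) gives $\lambda_c\le1$, so Lemma~\ref{lem:disjoint} makes the $\Delta(Q_i)$ pairwise disjoint, whence each nonzero $\delta$ lies in at most one of them; and condition (A$'$), read through the multiplicity formula, says $\delta$ occurs at most $\lambda$ times in that $\Delta(Q_i)$. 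These are precisely the two clauses of Definition~\ref{defn:sdf-lambda}, so $B$ is a $(v,w,\lambda)$-SDF with $N$ blocks.

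It remains to match the properness qualifiers. If the SDF is proper, some nonzero $\delta$ attains multiplicity $\lambda$ in some $\Delta(Q_i)$, i.e.\ $|Q_i\cap(Q_i+\delta)|=\lambda$, so the resulting OOC is a-proper; and it is c-proper because for any $i\neq j$ one has $\sum_{\delta\in\Z_v}|Q_i\cap(Q_j+\delta)|=w^2>0$ with each summand at most $1$, forcing some summand to equal $1$ (and when $N=1$ the OOC is c-proper by the convention in Definition~\ref{defn:proper}). In the other direction, a-properness of the OOC furnishes a $\delta$ with $|Q_i\cap(Q_i+\delta)|=\lambda$ for some $i$, which is exactly the requirement that the SDF be proper.

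The argument is essentially bookkeeping once Lemma~\ref{lem:disjoint} is available; the only points that need care are the boundary case $N=1$ (where $\lambda_c$ is $0$ rather than $1$, yet every condition still holds vacuously and the c-proper convention applies) and the observation that c-properness of the OOC must be derived, not merely assumed, from the SDF being proper.
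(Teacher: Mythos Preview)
Your proof is correct and follows essentially the same approach as the paper: invoke Lemma~\ref{lem:disjoint} to pass between pairwise disjointness of the $\Delta(Q_i)$ and $\lambda_c=1$, and use the multiplicity interpretation of $\Delta(Q_i)$ to handle $\lambda_a$. You are in fact more thorough than the paper's proof, which does not spell out the properness correspondence or the $N=1$ boundary case; your treatment of c-properness via $\sum_\delta |Q_i\cap(Q_j+\delta)|=w^2$ is a nice touch that the paper omits.
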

\begin{proof}
Let $B=\{Q_0, \ldots, Q_{N-1}\}$ be a $(v,w,\lambda)$-SDF. By definition, $\{ \Delta(Q_0), \ldots, \Delta(Q_{N-1})\}$ are disjoint so $\lambda_c=1$ by Lemma \ref{lem:disjoint}.  Each element of $\mathbb{Z}_v^*$ occurs at most $\lambda$ times in any $\Delta(Q_i)$ and so we can take $\lambda_a=\lambda$.  The converse is immediate.
\end{proof}

We note that this gives us a link with conflict-avoiding codes (Section \ref{sub:CAC}), whose definition requires $\Delta(Q_i) \cap \Delta(Q_j)=\emptyset$ for all $i \neq j$ but has no auto-correlation property - so a $(v,w,\lambda)$-SDF with any $\lambda$ is a conflict-avoiding code. Note also that here $\lambda_c=1$ and many results depend on the disjointness of the internal differences (Lemma \ref{lem:disjoint}). 

External difference families (EDFs) and their generalisations have been much-studied (see \cite{PatersonStinson}) and it is natural to ask about connections between OOCs and EDFs. 
\begin{definition}
Let $G$ be a group of order $v$.  A $(v, N, w, \lambda)$-EDF is a collection of $N$ disjoint $w$-subsets $\{Q_0, \ldots, Q_{N-1} \}$ of $G$ such that every non-identity element of $G$ occurs exactly $\lambda$ times in the multiset $\cup_{i \neq j} \Delta(Q_i, Q_j)$. 
\end{definition}

From this definition it is clear that, despite their similarities, it is far from easy to obtain OOCs from EDFs, since knowing the parameters of an EDF does not give an indication of the number of occurrences of elements as differences in each individual $\Delta(Q_i,Q_j)$. Moreover, unlike OOCs, EDFs require disjoint sets.  We can, however, say more about certain variants.  

\begin{definition}
Let $G$ be a group of order $v$.  A $(v, N, w; \lambda)$-strong external difference family (SEDF) is a collection of $N$ disjoint $w$-subsets $\{Q_0, \ldots, Q_{N-1} \}$ of $G$ such that for every $i \in \{0, 1, \ldots, N-1\}$, every non-identity element of $G$ occurs exactly $\lambda$ times in $\cup_{\{j: j \neq i\}} \Delta(Q_i, Q_j)$.
\end{definition}
We have the following:
\begin{theorem}\label{SEDF}
A $(v,N,w,\lambda)$-SEDF $\{Q_0,\ldots, Q_{N-1}\}$ in $\mathbb{Z}_v$ is a $(v,w,\lambda_a,\lambda)$-$\OOC$ where $\lambda_a$ is the maximum occurrence of any non-zero element in $\Delta(Q_i)$, for all $1 \leq i \leq N$.
\end{theorem}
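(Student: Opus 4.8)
The plan is to verify directly that the collection $\{Q_0,\ldots,Q_{N-1}\}$ satisfies conditions (A$'$) and (B$'$) of Section \ref{sub:OOCsubsets}, taking cross-correlation parameter equal to $\lambda$ and auto-correlation parameter equal to the stated $\lambda_a$. Recall that the multiplicity of a non-zero $\delta$ in the external difference multiset $\Delta(Q_i,Q_j)$ is $|Q_i\cap(Q_j+\delta)|$, and that the multiplicity of $\delta$ in the internal difference multiset $\Delta(Q_i)$ is $|Q_i\cap(Q_i+\delta)|$; both OOC conditions are thus assertions about multiplicities in difference multisets, which is exactly the language in which SEDFs are phrased.

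For (B$'$): I would fix distinct $i,j$ and $\delta\in\Z_v$. When $\delta\neq 0$, the key observation is that $\Delta(Q_i,Q_j)$ is one of the multisets whose union forms $\bigcup_{j'\neq i}\Delta(Q_i,Q_{j'})$, hence a submultiset of it. The SEDF property says every non-identity element occurs exactly $\lambda$ times in that union, so $\delta$ occurs at most $\lambda$ times in $\Delta(Q_i,Q_j)$; that is, $|Q_i\cap(Q_j+\delta)|\le\lambda$. When $\delta=0$, I would use that the blocks of an SEDF are pairwise disjoint, so $|Q_i\cap Q_j|=0\le\lambda$. Hence every $\delta\in\Z_v$ occurs at most $\lambda$ times in $\Delta(Q_i,Q_j)$ for all $i\neq j$, which is precisely (B$'$) with $\lambda_c=\lambda$.

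For (A$'$): there is essentially nothing to prove, since $\lambda_a$ is \emph{defined} as $\max_i\max_{\delta\neq 0}|Q_i\cap(Q_i+\delta)|$, so the inequality $|Q_i\cap(Q_i+\delta)|\le\lambda_a$ holds for all $i$ and all $\delta\neq 0$ automatically (and is attained, so the resulting OOC is in fact a-proper). Putting the two conditions together, $\{Q_0,\ldots,Q_{N-1}\}$ is a $(v,w,\lambda_a,\lambda)$-OOC.

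I do not expect a genuine obstacle here; the two points that require a little care are the $\delta=0$ case of (B$'$), which is settled by the disjointness hypothesis built into the SEDF definition rather than by any difference count, and the submultiset step, which relies on $j$ being one of the indices $j'\neq i$ over which the SEDF union is taken. It is worth remarking in passing that, in contrast to $\lambda_c$, the value $\lambda_a$ cannot be read off from the SEDF parameters $(v,N,w,\lambda)$ alone: it is governed by the internal difference pattern of the individual blocks, which is why the statement can only specify $\lambda_a$ implicitly.
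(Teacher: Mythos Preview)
Your proof is correct. The paper states this theorem without an explicit proof, treating it as immediate from the definitions; your direct verification of conditions (A$'$) and (B$'$)---using the submultiset observation for the cross-correlation bound and the disjointness of SEDF blocks for the $\delta=0$ case---is precisely the natural argument that justifies it.
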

When a $(v,N,w,\lambda)$-SEDF partitions the whole group (respectively, its set of non-identity elements), it is known \cite{Wenetal} that each $Q_i$ must be a $(v,w,w-\lambda)$-difference set (respectively a $(v,w,w-\lambda-1,w-\lambda)$-partial difference set) and so in this case $\lambda_a = w-\lambda$ and we obtain a $(v,w, w-\lambda, \lambda)$-OOC from Theorem \ref{SEDF}.  

\begin{example}  \label{ex:SEDF}
The sets $\{1,3,4,9,10,12\}$ and $\{2,5,6,7,8,11\}$ in $\mathbb{Z}_{13}$ form a $(13,2,6,3)$-SEDF and hence yield a $(13,6,3,3)$-OOC. This is a special case of an infinite family of cyclotomic 2-set SEDFs: for any prime $p$ such that $p \equiv 1 \mod 4$, there exists a $(p,2,\frac{p-1}{2}, \frac{p-1}{4})$-SEDF in $\mathbb{Z}_p$ whose sets are Paley partial difference sets comprising the nonzero squares and nonsquares in $GF(p)$.  These yield $(p, \frac{p-1}{2}, \frac{p-1}{4}, \frac{p-1}{4})$-OOCs. At present, one example of an SEDF in an abelian group with $N>2$ is known; all others have $N=2$ \cite{JedwabLi}.
\end{example}

If we relax the requirement for disjointness of the sets, we can obtain non-disjoint SEDFs from OOCs \cite{HuczynskaNg}.

Finally, particular constructions of EDF-like objects can be used to create OOCs, if they possess appropriate structural properties.  As an example, we show how a recent construction in \cite{HuczynskaJohnson} for disjoint partial difference families and external partial difference families (variants of disjoint difference families and external difference families where elements occur at one of two frequencies) can be adapted to yield an OOC with $\lambda_a \neq \lambda_c$.

\begin{theorem}
Let $m>3$ be an odd integer.  In $\mathbb{Z}_{2m}$, for each $1 \leq i \leq 2m-1$, define the subsets
$$ T_i=\{0,i,m-i,m+i,2m-i\}.$$
Then the collection of sets $T=\{T_1, \ldots, T_{\frac{m-1}{2}}\}$ yields a $(2m,5;4,3)$-OOC.
\end{theorem}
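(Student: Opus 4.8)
The plan is to work with the subset description of Section~\ref{sub:OOCsubsets}. Write $A_i := T_i \setminus \{0\} = \{\pm i, \pm(m-i)\}$, so that each $T_i$ is a symmetric subset of $\Z_{2m}$ (that is, $T_i = -T_i$), and record the structural fact on which everything rests: $A_i = B_i + H$, where $B_i = \{i,-i\}$ and $H = \{0,m\}$ is the subgroup of order $2$. Since $m$ is odd, $B_i \cap (B_i+m) = \emptyset$, so $A_i = B_i \sqcup (B_i+m)$ and every $a \in A_i$ has a unique expression $a = b+h$ with $b \in B_i$, $h \in H$. First I would dispatch the routine points: for $1 \le i \le (m-1)/2$ the five listed elements of $T_i$ are pairwise distinct (each possible coincidence collapses, using $m$ odd, to something impossible such as $2i \equiv 0$ or $2i \equiv m$), so $w = 5$; and the $T_1,\dots,T_{(m-1)/2}$ are pairwise distinct, which will follow from the disjointness established below. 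It then remains to verify conditions (A$'$) and (B$'$) with $\lambda_a = 4$ and $\lambda_c = 3$.

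For the auto-correlation: for any $\delta \neq 0$ we have $|T_i \cap (T_i+\delta)| \le |T_i| = 5$, with equality only if $T_i + \delta = T_i$. Since $|T_i| = 5$ is prime and $0 \in T_i$, such a $\delta$ would force $T_i$ to be \emph{the} subgroup of order $5$; but $T_i$ is not a subgroup (it is not closed under doubling, again because $m$ is odd), so no $\delta$ attains $5$ and $\lambda_a \le 4$. Equality holds at $\delta = m$: $T_i + m = \{m\} \cup \{\pm i, \pm(m-i)\}$ meets $T_i = \{0\} \cup \{\pm i, \pm(m-i)\}$ in exactly the four elements $\pm i, \pm(m-i)$, so $|T_i \cap (T_i+m)| = 4$.

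The heart of the proof is the cross-correlation bound, for which I would fix $i \neq j$ in $\{1,\dots,(m-1)/2\}$ and proceed in four steps. \emph{(i)} $A_i \cap A_j = \emptyset$: any equation between an element of $\{\pm i, \pm(m-i)\}$ and one of $\{\pm j, \pm(m-j)\}$ reduces in $\Z_{2m}$ to one of $i \equiv \pm j$, $i+j \equiv m$, $i-j \equiv m$, all impossible for distinct $i,j$ with $0 < i,j \le (m-1)/2$ (so $0 < i+j < m$ and $|i-j| < m$); in particular $T_i \cap T_j = \{0\}$, so $0$ occurs exactly once in $\Delta(T_i,T_j)$. \emph{(ii)} Splitting the $25$ ordered pairs according to whether each coordinate is $0$, and using $-A_j = A_j$, one gets the multiset identity $\Delta(T_i,T_j) = \{0\} \uplus A_i \uplus A_j \uplus (A_i - A_j)$. \emph{(iii)} For the $16$-element multiset $A_i - A_j$: since $A_i = B_i+H$ and $A_j = B_j+H$ we have $A_i - A_j = (B_i - B_j) + H$, where $B_i - B_j = \{\pm(i-j),\pm(i+j)\}$ consists of four distinct residues; moreover the eight residues in $(B_i - B_j)+\{0,m\}$ are also distinct, since a coincidence would give $\beta-\beta' \equiv m$ for some $\beta,\beta' \in B_i - B_j$, whereas each such difference lies in $\{0,\pm 2i,\pm 2j,\pm 2(i-j),\pm 2(i+j)\}$ and hence is even modulo $2m$, while $m$ is odd. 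Counting preimages under $(b,b',h,h') \mapsto (b-b')+(h-h')$ then shows each of these eight residues occurs with multiplicity exactly $2$ in $A_i - A_j$, and no other residue occurs. \emph{(iv)} Combining (ii) and (iii): for $\delta \neq 0$, its multiplicity in $\Delta(T_i,T_j)$ equals its multiplicity in $\{0\} \uplus A_i \uplus A_j$ (which is $0$ or $1$, as $A_i \cap A_j = \emptyset$ and $0 \notin A_i \cup A_j$) plus its multiplicity in $A_i - A_j$ (which is $0$ or $2$), hence at most $3$. With (i) this gives $\lambda_c \le 3$; tightness holds, e.g.\ taking $i=2$, $j=1$, where $\delta=1$ lies in $A_j \setminus A_i$ and in $A_i - A_j$, so has multiplicity $3$.

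The cross-correlation step is where the real content lies, and is where I would expect any difficulty. Two features make it work: first, that $A_i$ is a union of cosets of $H=\{0,m\}$, which renders the Minkowski difference $A_i - A_j$ completely transparent (eight values, each of multiplicity $2$); and second, the repeated use of $m$ being odd — it kills every ``off-by-$m$'' coincidence that would otherwise inflate a multiplicity — together with the narrow index range $i,j \le (m-1)/2$, which is exactly what forces $A_i \cap A_j = \emptyset$ and thereby pins the per-pair multiplicity at $3$ rather than $4$.
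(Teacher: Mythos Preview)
Your approach is essentially the paper's: both remove $0$ from each $T_i$ to obtain $A_i$ (the paper's $S_i$), decompose $\Delta(T_i,T_j) = \{0\} \uplus A_i \uplus A_j \uplus (A_i - A_j)$, and show that $A_i - A_j$ consists of eight distinct residues each with multiplicity~$2$ --- the paper records this as $2S_{i-j} \cup 2S_{i+j}$ and imports the needed structural facts (disjointness of the $S_k$, the form of $\Delta(S_i,S_j)$) from \cite{HuczynskaJohnson}, whereas you derive them directly from the coset description $A_i = \{\pm i\} + \{0,m\}$, making your argument self-contained.

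One small correction to your auto-correlation step: the parenthetical that $T_i$ ``is not closed under doubling'' is not always valid. When $3 \mid m$ and $i = m/3$ one has $T_i = \{0,i,2i,4i,5i\}$ in $\Z_{6i}$, which \emph{is} closed under doubling. The conclusion that $T_i$ is not a subgroup is still correct, and the quickest fix is to observe instead that $i + (m-i) = m \notin T_i$; equivalently, any subgroup of odd order in $\Z_{2m}$ (with $m$ odd) consists entirely of even elements, whereas one of $i,\,m-i$ is odd. The paper sidesteps this by computing $\Delta(T_i) = 4\{m\} \cup 2S_i \cup 2S_{2i}$ explicitly.
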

\begin{proof}
In Proposition 3.1 of \cite{HuczynskaJohnson}, the subsets $S_i=T_i \setminus \{0\}=\{i,m-i,m+i,2m-i\}$ of $\mathbb{Z}_{2m}$ are defined ($1 \leq i \leq 2m-1$).  It is shown that $S_i=S_{m-i}=S_{m+i}=S_{2m-i}$, that $S_1, \ldots, S_{\frac{m-1}{2}}$ comprise all distinct $S_i$ in $\mathbb{Z}_{2m}$ and partition $\mathbb{Z}_{2m} \setminus \{0,m\}$, and that for $1 \leq i \neq j \leq \frac{m-1}{2}$, $\Delta(S_i)=4\{m\} \cup 2S_{2i}$ while $\Delta(S_i,S_j)=2S_{i-j} \cup 2S_{i+j}$.  

It is clear that, for $1 \leq i \neq j \leq \frac{m-1}{2}$, $\Delta(T_i)=4\{m\} \cup 2S_i \cup 2S_{2i}$ (since $S_i=-S_i$ by construction) and $\Delta(T_i,T_j)=\{0\} \cup S_i \cup S_j \cup 2S_{i-j} \cup 2S_{i+j}$.  Hence each nonzero element of $\mathbb{Z}_{2m}$ occurs at most $4$ times in any $\Delta(T_i)$ ($1 \leq i \leq \frac{m-1}{2}$), so the OOC corresponding to $T$ has $\lambda_a=4$.  For $1 \leq i \neq j \leq \frac{m-1}{2}$, it is clear that $S_i$ and $S_j$ are distinct; moreover it can be verified that $S_{i-j}$ and $S_{i+j}$ are distinct (since, modulo $2m$, $i+j$ is not equal to any of $i-j,m-(i-j),m+(i-j)$ or $2m-(i-j)$).  Hence each element of $\mathbb{Z}_{2m}$ occurs at most $3$ times in any $\Delta(T_i,T_j)$ and so $\lambda_c=3$ for the corresponding OOC.
\end{proof}

\begin{example}
\begin{itemize}
\item[(i)] The sets $\{ \{0, 1, 4, 6, 9\}, \{0,2, 3, 7, 8\} \}$ in $\mathbb{Z}_{10}$ yield a $(10,5,4,3)$-OOC.
\item[(ii)] The sets $\{ \{0, 1, 8, 10, 17\}, \{0, 2, 7, 11, 16\}, \{0, 3, 6, 12, 15\}, \{0, 4, 5, 13, 14\} \}$ in $\mathbb{Z}_{18}$ yield an $(18,5,4,3)$-OOC.
\end{itemize}
\end{example}

\subsection{Packings}
\label{sub:packing}

Packings give OOCs with $\lambda_a = \lambda_c \ge 1$.  

\begin{definition}[$t$-$(v,w, \lambda)$ packing] \label{defn:packing}
A $t$-$(v,w, \lambda)$ packing (or packing design) $(V, \bie)$ consists of a set $V$ of size $v$, a collection $\bie$ of $w$-subsets of $V$ called blocks, such that every set of $t$ elements of $V$ is contained in at most $\lambda$ blocks.
\end{definition}

A $t$-$(v, w, \lambda)$ packing 
admitting a cyclic and point-regular automorphism group is called a cyclic $t$-$(v, w, \lambda)$ packing.  The point set $V$ can be identified with $\Z_v$ and the automorphism is then $\sigma: i \to i+1 \pmod{v}$.  If $B=\{b_0, b_1, \ldots, b_{w-1}\}$ is a block then the orbit of $B$ is the set of blocks $\{B^{\sigma^i}=\{b_0+i, b_1+i \ldots, b_{w-1}+i \} \; | \; i \in \Z_v\}$.  If a block orbit have $v$ distinct blocks we say it is a full orbit, otherwise it is short.  We may choose an arbitrarily fixed block from each 
block orbit and call it a base block.

Different versions of the following theorem are stated and proved in \cite{FujiharaMiao, Yin}.  We state it in its most general form here and include a proof.
 
\begin{theorem} \label{thm:packing}
A cyclic $t$-$(v,w, 1)$-packing with $N$ full (block) orbits is
equivalent to a non-degenerate $(v, w, t-1, t-1)$-OOC of size $N$.
\end{theorem}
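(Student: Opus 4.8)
The plan is to build a dictionary between the "every $t$-subset lies in at most one block'' condition of the packing and the intersection conditions (A$'$), (B$'$) defining the OOC, using the fact that a cyclic packing is recovered from base blocks of its orbits. The one observation doing all the work is elementary: in any $t$-$(v,w,1)$ packing, two \emph{distinct} blocks $B,B'$ satisfy $|B\cap B'|\le t-1$, since otherwise any $t$-subset of $B\cap B'$ would lie in both $B$ and $B'$; conversely, if every pair of distinct blocks meets in at most $t-1$ points then no $t$-subset lies in two blocks.

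For the forward direction I would fix base blocks $Q_0,\ldots,Q_{N-1}$, one from each full orbit, regarded as $w$-subsets of $\Z_v$. Each $Q_i$ has full cyclic order because its orbit is full, and no $Q_i$ is a translate of $Q_j$ for $i\ne j$ (they lie in distinct orbits), so the collection is non-degenerate. For (A$'$): if $\delta\ne 0$ then $Q_i$ and $Q_i+\delta$ are distinct blocks of the packing, hence $|Q_i\cap(Q_i+\delta)|\le t-1$, giving $\lambda_a\le t-1$. For (B$'$): for $i\ne j$ and any $\delta$, the sets $Q_i$ and $Q_j+\delta$ lie in different orbits and so are distinct blocks, hence $|Q_i\cap(Q_j+\delta)|\le t-1$, giving $\lambda_c\le t-1$. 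Thus the base blocks form a non-degenerate $(v,w,t-1,t-1)$-OOC of size $N$ (short orbits of the packing, if any, carry no block of full cyclic order and play no role in the correspondence).

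For the reverse direction, given a non-degenerate $(v,w,t-1,t-1)$-OOC $\{Q_0,\ldots,Q_{N-1}\}$ I would develop it: set $\bie=\{Q_i+g : 0\le i\le N-1,\ g\in\Z_v\}$ with the natural point-regular action $i\mapsto i+1$ of $\Z_v$. Non-degeneracy guarantees each $Q_i$ has full cyclic order, so its orbit has exactly $v$ distinct blocks, and that no $Q_i$ is a shift of $Q_j$, so the $N$ orbits are pairwise disjoint; hence $\bie$ is the union of $N$ full orbits. To verify the packing property, suppose a $t$-subset lies in two distinct blocks $Q_i+g_1$ and $Q_j+g_2$; translating by $-g_1$ this forces $|Q_i\cap(Q_j+(g_2-g_1))|\ge t$. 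If $i=j$ then $g_2\ne g_1$ (distinct blocks of full cyclic order), contradicting (A$'$) with $\lambda_a=t-1$; if $i\ne j$ it contradicts (B$'$) with $\lambda_c=t-1$. So $\bie$ is a cyclic $t$-$(v,w,1)$ packing with $N$ full orbits. Finally I would note the correspondence is well defined up to equivalence: replacing a base block by another block of the same orbit replaces some $Q_i$ by a translate, which by Lemma \ref{lem:shift} alters neither the OOC nor its equivalence class, and conversely equivalent OOCs develop to isomorphic packings.

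The argument is essentially routine once this dictionary is set up; the only point needing care is the case split in the reverse direction between two blocks arising from the \emph{same} base block — where the "full cyclic order'' clause of non-degeneracy is exactly what forces $g_1\ne g_2$ and feeds into $\lambda_a$ — versus two \emph{different} base blocks, which feeds into $\lambda_c$, together with recognising that "full orbit'' and "non-degenerate'' are two phrasings of the same property. There is no deeper obstacle: $\lambda_a$ and $\lambda_c$ both equal $t-1$ precisely because the packing condition makes no distinction between an internal and an external coincidence of blocks.
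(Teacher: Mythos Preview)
Your proof is correct and follows essentially the same approach as the paper: both directions rest on the observation that two distinct blocks of a $t$-$(v,w,1)$ packing meet in at most $t-1$ points, and conversely that a correlation value of $t$ would place a $t$-subset in two blocks of the development. Your version is in fact more careful than the paper's on two points the paper leaves implicit: you explicitly verify non-degeneracy in the forward direction (full orbit $\Rightarrow$ full cyclic order, distinct orbits $\Rightarrow$ not translates), and you make the $i=j$ versus $i\ne j$ case split in the reverse direction explicit, tying each to the relevant correlation bound.
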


\begin{proof}
Let $\design=(V, \bie)$ be a cyclic $t$-$(v,w, 1)$-packing with
$N$ full (block) orbits, and let $\design'$ be a collection of $N$ base blocks $\{B_0, \ldots, B_{N-1}\}$ that are full blocks.  Consider $|B_i \cap B_j|$.
Suppose $|B_i \cap B_j| \ge t$.  Then a set of $t$ points of $V$ is
contained in 2 blocks, and this contradicts the definition of a
$t$-$(v,w, 1)$-packing.  Hence $|B_i \cap B_j| \le t-1$, and
this applies to all sets $B_i$, $B_j$ and their translates.  This means that $\code=\{B_0, \ldots, B_{N-1}\}$ is a
$(v, w, t-1, t-1)$-OOC of size $N$. 

Let $\code$ be a non-degenerate $(v, w, t-1, t-1)$-OOC of size $N$ with
corresponding subsets $\{B_0, \ldots, B_{N-1}\}$ of $\Z_v$.  Let
$\bar{\code}$ be the cyclic closure of $\code$ and let $\design$ be
the corresponding set system consisting of $\{B_0, \ldots, B_{N-1}\}$
and all their translates.  Then $\design$ has $v$ points and $vN$
blocks partitioned into $N$ full orbits. Every set of $t$ points are
contained in at most one block: if there is a set of $t$ points
contained in two blocks then the corresponding sequences will have
correlation value $t$.  Hence $\design$ is a cyclic $t$-$(v,w,
1)$-packing design with $N$ full (block) orbits.
\end{proof}

Note that the proof does not work at all for any $t$-$(v,w,\lambda)$-packing for $\lambda > 1$, and the proof does not distinguish between $\lambda_a$
and $\lambda_c$.

\begin{note}[On ``optimum'' (meeting the bound) and ``maximum'' (biggest possible)] 
\cite{Yin} says that a cyclic
  $2$-$(v,k,1)$-packing design has either $k(k-1) | v-1$ and each orbit is full,
or $k(k-1) | v-k$ and all orbits apart from one is full.  So we have an ``optimal'' cyclic $2$-$(v,k,1)$-packing design that gives an ``optimal'' OOC in the first case, and an ``optimum'' $2$-$(v,k,1)$-packing design that gives a ``maximum'' OOC in the second, with $\lfloor (v-1)/k(k-1) \rfloor$ sequences.  Example 2.2 in \cite{Yin} has $v=15$, $k=3$.  The OOC is maximum, the design has 3 orbits, one with 5 blocks and 2 with 15 blocks. 
\end{note}

\section{Bounds on $\lambda_a$ and $\lambda_c$}
\label{sec:bounds}

We focus here on the bounds on the values of $\lambda_a$ and $\lambda_c$, and consider what structural insights might be found in the proofs of these bounds.  The published bounds on the the size $N$ of an OOC are discussed in Section \ref{sub:bounds-on-size} but are not our chief concern here.

\subsection{Bounds on $\lambda_a$}
\label{sub:lambda_a_bounds}

As mentioned in the Introduction, $\lambda_a$ mainly relates to the issue of synchronisation and minimising does not seem to be the priority in practice.  Here we examine the constraints on $\lambda_a$ in a $(v, w, \lambda_a, \lambda_c)$-OOC, given $v$ and $w$.  We consider first what the lower bound is.

\begin{lemma} \label{lem:labound}
Let $\code = \{ X_0, \ldots, X_{N-1}\}$ be a $(v,w, \lambda_a, \lambda_c)$-OOC.  Then 
\[ \lambda_a \ge \frac{w(w-1)}{v-1}. \]
\end{lemma}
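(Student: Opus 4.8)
The plan is to use a straightforward double-counting argument on internal differences within a single set, exactly mirroring the necessary-condition computation $\lambda(v-1) = Nw(w-1)$ for difference families quoted after Definition~\ref{defn:df}. Fix any $X_i \in \code$ with corresponding subset $Q_i \subseteq \Z_v$, $|Q_i| = w$. Count the ordered pairs $(x,x')$ with $x,x' \in Q_i$, $x \neq x'$; there are exactly $w(w-1)$ of them. Each such pair contributes one element to the multiset $\Delta(Q_i)$, so $|\Delta(Q_i)| = w(w-1)$ (counted with multiplicity). The $v-1$ nonzero elements of $\Z_v$ are the only possible values of these differences, and by condition (A$'$) each nonzero $\delta$ occurs with multiplicity at most $\lambda_a$ in $\Delta(Q_i)$. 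Hence $w(w-1) = |\Delta(Q_i)| \le (v-1)\lambda_a$, which rearranges to the claimed bound $\lambda_a \ge \frac{w(w-1)}{v-1}$.

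The key steps, in order, are: (1) pass to the subset picture via the correspondence in Section~\ref{sub:OOCsubsets}; (2) observe $|\Delta(Q_i)| = w(w-1)$ by counting ordered pairs of distinct elements; (3) apply the bound $|Q_i \cap (Q_i+\delta)| \le \lambda_a$, i.e.\ the multiplicity bound phrased in terms of $\Delta(Q_i)$, summed over the $v-1$ nonzero $\delta$; (4) combine and rearrange. Since $\lambda_a$ is a non-negative integer, one could further sharpen to $\lambda_a \ge \lceil w(w-1)/(v-1) \rceil$, but the stated lemma does not require this, so I would leave that remark to a subsequent corollary if at all.

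I do not anticipate a genuine obstacle here: the argument is a one-line pigeonhole/averaging estimate and requires only a single set from the OOC, so the size $N$ and the cross-correlation value $\lambda_c$ play no role. The only point meriting a word of care is the boundary case $w = 1$: then $w(w-1)/(v-1) = 0$ and the bound reads $\lambda_a \ge 0$, which is consistent with Lemma~\ref{lem:lambda_a} (indeed $\Delta(Q_i)$ is empty, so the inequality $0 \le (v-1)\lambda_a$ is vacuous). Likewise the non-degeneracy hypothesis is not needed for this direction. So the proof is essentially the display
\[
w(w-1) \;=\; |\Delta(Q_i)| \;=\; \sum_{\delta \in \Z_v \setminus \{0\}} |Q_i \cap (Q_i + \delta)| \;\le\; (v-1)\lambda_a,
\]
followed by division by $v-1$.
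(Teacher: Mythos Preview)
Your argument is correct and is essentially identical to the paper's own proof: both fix a single $Q_i$, double-count the $w(w-1)$ ordered pairs of distinct elements against the at most $\lambda_a(v-1)$ contributions from the $v-1$ nonzero shifts, and divide. The only cosmetic difference is that the paper packages the count as a set $F$ of triples $((x,x'),\delta)$ rather than as $|\Delta(Q_i)| = \sum_{\delta \neq 0} |Q_i \cap (Q_i+\delta)|$.
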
  
\begin{proof}
Let $\{Q_0, \ldots, Q_{N-1}\}$ be the $w$-subsets of $\Z_v$  corresponding to $\code$.

We use double counting:

For some $Q_i \in \code$, let $F = \{((x, x'), \delta) \; : \; x-x' = \delta, x, x' \in Q_i, x \neq x'\}$.

There are $w$ values of $x$ and $w-1$ values of $x'$ and for each pair
of $(x, x')$ there is a unique $\delta = x-x'$.  So $|F| = w(w-1)$. On the other
hand there are $v-1$ possible values of $\delta$, and for each $\delta$ there
are at most $\lambda_a$ pairs of $(x,x')$ such that $x-x' = \delta$.  So
$|F| \le \lambda_a (v-1)$.  This gives the inequality.
\end{proof}

In fact, the proof above says that the auto-correlation value for every $X_i \in \code$ is at least $w(w-1)/(v-1)$. By definition, this auto-correlation value must be at most $\lambda_a$.  Hence we must have

\begin{corollary} \label{cor:labound-eq}
    For an a-proper $(v,w, \lambda_a, \lambda_c)$-OOC $\code$, if  $\lambda_a = w(w-1)/(v-1)$ then every sequence in $\code$ has auto-correlation value exactly $\lambda_a$ for every shift (and hence corresponds to a $(v,w,\lambda_a)$-difference set).
\end{corollary}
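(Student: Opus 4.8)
The plan is to reopen the double-counting argument from the proof of Lemma \ref{lem:labound} and identify exactly when its single inequality is tight. Fix one of the $w$-subsets $Q_i \subseteq \Z_v$ corresponding to $\code$, and recall the set $F = \{((x,x'),\delta) : x,x' \in Q_i,\ x \neq x',\ x - x' = \delta\}$, for which $|F| = w(w-1)$ holds with no slack. The only estimate used in that proof is that, for each of the $v-1$ nonzero values of $\delta$, at most $\lambda_a$ ordered pairs $(x,x')$ satisfy $x - x' = \delta$, yielding $|F| \le \lambda_a(v-1)$.

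First I would substitute the hypothesis $\lambda_a = w(w-1)/(v-1)$ to get $|F| = \lambda_a(v-1)$, so the inequality $|F| \le \lambda_a(v-1)$ holds with equality. Then comes a pigeonhole step: the $|F| = \lambda_a(v-1)$ elements of $F$ are distributed over exactly $v-1$ values of $\delta$, each receiving at most $\lambda_a$, so equality of the total forces each nonzero $\delta$ to receive exactly $\lambda_a$. Using the multiplicity interpretation recorded at the end of Section \ref{sub:OOCsubsets}, this reads $|Q_i \cap (Q_i + \delta)| = \lambda_a$ for every $\delta \in \Z_v \setminus \{0\}$, i.e.\ the auto-correlation value of the corresponding sequence equals $\lambda_a$ for every shift. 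Since nothing in this argument referred to the particular index $i$, the same conclusion holds for every member of $\code$.

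To finish, I would observe that the condition ``$|Q_i \cap (Q_i + \delta)| = \lambda_a$ for all nonzero $\delta$'' is precisely the defining property of a $(v,w,\lambda_a)$-difference set in $\Z_v$ as recalled in Remark \ref{rem:equiv}, which gives the parenthetical claim. I do not anticipate a genuine obstacle here; the only points worth stating carefully are the pigeonhole step (equality in the sum together with the uniform per-$\delta$ cap forces every per-$\delta$ count to be exactly $\lambda_a$) and the observation that a-properness plays no essential role beyond guaranteeing the hypothesis is consistent — indeed the equality already forces $\lambda_a$ to be an attained integer, so the difference-set conclusion is automatically non-vacuous.
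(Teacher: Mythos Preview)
Your proposal is correct and follows essentially the same approach as the paper: the paper does not give a standalone proof of the corollary but simply remarks, immediately before stating it, that the double-counting argument of Lemma~\ref{lem:labound} already forces each sequence's auto-correlation to equal $w(w-1)/(v-1)$, and your write-up makes this pigeonhole step explicit. Your closing observation that a-properness is not doing real work is also accurate and worth keeping.
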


\begin{example} \label{eg:labound-eq}
The $(73,9,1,3)$-OOC consisting of the multiplicative cosets of $\langle 2 \rangle $ in $\Z_{73}$, 
$$\langle 2 \rangle = \{1,2,4,6,16,32,37,55,64\},$$
has $\lambda_a =  \frac{w(w-1)}{v-1}$.  All 8 subsets are $(73,9,1)$-difference sets.  
\end{example}

As for an upper bound for $\lambda_a$, any sequence with full cyclic order has $\lambda_a \le w-1$.  
What can we say about an a-proper OOC with $\lambda_a=w-1$?  Suppose $Q \subseteq \Z_v$ corresponds to an OOC sequence with auto-correlation value $\lambda = w-1$.  This means that there is some $\delta \in \Z_v \setminus \{0\}$ such that $\delta$ occurs $w-1$ times as an internal difference in $\Delta(Q)$.  If $\delta$ is coprime to $v$ and therefore has a multiplicative inverse $\delta^{-1}$, then $Q' = \delta^{-1} Q$ has the property that 1 occurs $w-1$ times as an internal difference in $\Delta(Q')$.  Hence $Q'$ is a shift of $\{0,1,\ldots, w-1\}$, and we have

\begin{theorem}
    \label{thm:lambda_a_upperbound}
    Let $\code$ be an a-proper $(v,w,w-1, \lambda_c)$-OOC.  If $\code$ contains a sequence $X=(x_i)$ such that $X$ and $X+\delta = (x_{i+\delta})$ has auto-correlation value $w-1$ for some shift $\delta$ coprime to $v$, then $\code$ is equivalent to an OOC containing the sequence with $w$ consecutive 1s. 
\end{theorem}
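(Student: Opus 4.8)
The plan is to move to the subset description of the OOC, apply the automorphism of $\Z_v$ that rescales the distinguished difference $\delta$ to $1$, and then show that a $w$-subset of $\Z_v$ in which the difference $1$ occurs $w-1$ times must be a cyclic interval of length $w$.

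First I would rephrase the hypothesis in set language. Let $Q\subseteq\Z_v$ be the $w$-subset corresponding to $X$. By condition (A$'$), saying that $X$ and $X+\delta$ have auto-correlation value $w-1$ is exactly $|Q\cap(Q+\delta)|=w-1$, i.e.\ $\delta$ occurs with multiplicity $w-1$ in the multiset $\Delta(Q)$. This already forces $w<v$: if $w=v$ then $Q=\Z_v$ and $|Q\cap(Q+\delta)|=v=w\neq w-1$ for every $\delta$.

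Next, since $\delta\in\Z_v^*$, I would apply the automorphism $x\mapsto\delta^{-1}x$ to all the subsets of $\code$. This replaces each subset $Q_i$ by $\delta^{-1}Q_i$; by Lemma \ref{lem:auto} the auto- and cross-correlation profiles are merely permuted, so the result is again a $(v,w,w-1,\lambda_c)$-OOC, and by Definition \ref{defn:equivOOCs} it is equivalent to $\code$. Writing $Q'=\delta^{-1}Q$, every pair $(x,x')$ in $Q$ with $x-x'=\delta$ corresponds to a pair in $Q'$ with difference $\delta^{-1}\delta=1$, so $1$ occurs $w-1$ times in $\Delta(Q')$; equivalently $|Q'\cap(Q'+1)|=w-1$.

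The combinatorial heart --- the step I expect to need the most care, essentially over the cyclic wrap-around and the excluded case $Q'=\Z_v$ --- is to conclude that $Q'$ is a cyclic interval. Observe $Q'\cap(Q'+1)=\{a\in Q':a-1\in Q'\}$, so the number of $a\in Q'$ with $a-1\notin Q'$ equals $|Q'|-|Q'\cap(Q'+1)|=w-(w-1)=1$. Regarding $\Z_v$ as a cycle, the elements of $Q'$ split into maximal runs of consecutive elements, and since $w<v$ gives $Q'\neq\Z_v$, each such run contributes exactly one element $a$ with $a-1\notin Q'$ (its starting point). Hence there is a single run, of length $w$, so $Q'=\{s,s+1,\dots,s+w-1\}$ (indices modulo $v$) for some $s\in\Z_v$. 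Shifting every sequence of this equivalent OOC by $-s$ --- which is permitted within an equivalence, by Definition \ref{defn:equivOOCs} --- carries $Q'$ to $\{0,1,\dots,w-1\}$, whose sequence is the one with $w$ consecutive $1$s. Thus $\code$ is equivalent to an OOC containing that sequence, as claimed.
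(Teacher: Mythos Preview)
Your proof is correct and follows essentially the same approach as the paper: pass to the subset description, multiply by $\delta^{-1}$ to reduce to the case where $1$ occurs $w-1$ times as an internal difference, and conclude that the resulting subset is a cyclic interval. The paper's argument is terser, simply asserting that $Q'=\delta^{-1}Q$ must be a shift of $\{0,1,\ldots,w-1\}$, whereas you supply the details of this step via the run-counting argument and the check that $w<v$.
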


If $v$ is prime then all non-zero $\delta$ has a multiplicative inverse.  Hence we have a corollary:

\begin{corollary}
    \label{cor:lambda_a_upperbound}
    Let $\code$ be an a-proper $(v,w,w-1, \lambda_c)$-OOC where $v$ is a prime.  Then $\code$ is equivalent to an OOC containing a sequence with $w$ consecutive 1s.
\end{corollary}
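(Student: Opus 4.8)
The plan is to obtain this as an immediate consequence of Theorem~\ref{thm:lambda_a_upperbound}: when $v$ is prime, the coprimality hypothesis appearing in that theorem is automatically satisfied, so nothing is left to do beyond unwinding the definitions.

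First I would invoke a-properness. By Definition~\ref{defn:proper}, since $\code$ is a-proper and $\lambda_a = w-1$, there is a sequence $X = (x_i) \in \code$ that attains the auto-correlation value; that is, by Definition~\ref{defn:OOCsequences}(A), $\sum_{t=0}^{v-1} x_t x_{t+\delta} = w-1$ for some $\delta$ with $0 < \delta \le v-1$. In particular $\delta$ is a nonzero element of $\Z_v$, so when $v$ is prime we have $\gcd(\delta,v)=1$, i.e.\ $\delta \in \Z_v^*$ is coprime to $v$. This is precisely the extra hypothesis needed in Theorem~\ref{thm:lambda_a_upperbound}, so applying that theorem to the pair $X$, $X+\delta$ gives that $\code$ is equivalent to an OOC containing a sequence with $w$ consecutive $1$s, as required.

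There is no real obstacle here: all the substantive work lives in Theorem~\ref{thm:lambda_a_upperbound} (the observation that a difference of multiplicity $w-1$ in $\Delta(Q)$ can be scaled to the difference $1$ by an automorphism of $\Z_v$, forcing $Q$ to be a shift of $\{0,1,\dots,w-1\}$), and the corollary merely adds the remark that over a prime modulus every nonzero element is invertible. The one point worth stating explicitly is that the witnessing shift $\delta$ genuinely lies in $\Z_v \setminus \{0\}$, which is built into the range $0 < \delta \le v-1$ in the definition of the auto-correlation profile. One could also note, more generally, that the same argument delivers the conclusion for composite $v$ whenever the shift attaining $\lambda_a = w-1$ happens to be coprime to $v$; primality is simply the cleanest condition guaranteeing this for every possible witnessing shift.
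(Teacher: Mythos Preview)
Your proposal is correct and follows exactly the paper's approach: the paper states the corollary as an immediate consequence of Theorem~\ref{thm:lambda_a_upperbound}, observing only that when $v$ is prime every nonzero shift $\delta$ is invertible, which is precisely what you have written out in detail.
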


In \cite{YXYang}, $(v,w,w-1, \lambda_c)$-OOC with $\lambda_c=w-1$ are described.  Such an OOC of maximum size is constructed by taking all $w$-subsets of $Z_v$ with full cyclic order, and removing all their cyclic shifts.

\subsection{Bounds on $\lambda_c$}
\label{sub:lambda_c_bounds}

Now we consider the bounds on $\lambda_c$.  Since in the application of OOCs $\lambda_c$ is to be minimised, we consider a lower bound.

\begin{lemma} \label{lem:lcbound}
Let $\code = \{ X_0, \ldots, X_{N-1}\}$ be a $(v,w, \lambda_a,
\lambda_c)$-OOC with $|\code| \ge 2$.   Then
\[ \lambda_{c} \ge \frac{w^2}{v}. \]
\end{lemma}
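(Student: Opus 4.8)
The plan is to use a simple double-counting / averaging argument on the external differences between a pair of sequences. Since $|\code| \ge 2$, we may pick two distinct subsets $Q_i, Q_j \subseteq \Z_v$ corresponding to two distinct sequences of $\code$, and consider the multiset of external differences $\Delta(Q_i, Q_j) = \{x - x' : x \in Q_i, x' \in Q_j\}$. Because $|Q_i| = |Q_j| = w$ and every ordered pair $(x,x') \in Q_i \times Q_j$ contributes a difference (there is no exclusion $x \neq x'$ here, unlike for internal differences), this multiset has exactly $w^2$ elements, all lying in $\Z_v$.

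Next I would invoke the fact, recorded just after condition (B$'$) in Section \ref{sub:OOCsubsets}, that the multiplicity of a given $\delta \in \Z_v$ in $\Delta(Q_i, Q_j)$ equals $|Q_i \cap (Q_j + \delta)|$. Summing over all $\delta \in \Z_v$ therefore gives $\sum_{\delta \in \Z_v} |Q_i \cap (Q_j + \delta)| = w^2$. Since this is a sum of $v$ nonnegative integers totalling $w^2$, at least one summand is at least the average $w^2 / v$; that is, there exists $\delta_0 \in \Z_v$ with $|Q_i \cap (Q_j + \delta_0)| \ge w^2/v$. (One can equally phrase this as a pigeonhole statement: $w^2$ objects distributed among $v$ boxes force some box to contain at least $\lceil w^2/v \rceil$ of them.)

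Finally, by condition (B$'$) we have $|Q_i \cap (Q_j + \delta_0)| \le \lambda_c$, and combining the two inequalities yields $\lambda_c \ge w^2/v$, as required.

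There is essentially no real obstacle in this argument; the only point requiring a little care is the hypothesis $|\code| \ge 2$, which is exactly what guarantees the existence of two distinct sets $Q_i, Q_j$ on which to run the count (recall that for $N=1$ we set $\lambda_c = 0$ by convention, so the bound genuinely requires $N \ge 2$). It is worth noting that the bound can be sharpened to $\lambda_c \ge \lceil w^2/v \rceil$ by the pigeonhole phrasing, and that equality in $\lambda_c = w^2/v$ forces $|Q_i \cap (Q_j+\delta)|$ to be constant (equal to $w^2/v$) over all $\delta$ — a rigidity observation analogous to Corollary \ref{cor:labound-eq}, and consistent with the extremal constructions of Theorem \ref{thm:lcbound-eq}.
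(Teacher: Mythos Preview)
Your argument is correct and is essentially the same double-counting on $\Delta(Q_i,Q_j)$ that the paper gives: the paper counts $|F'|=w^2$ and bounds it above by $v\lambda_c$, whereas you sum the $|Q_i\cap(Q_j+\delta)|$ to $w^2$ and pick one term at least the average, which is a trivially equivalent rephrasing. Your additional remarks on the ceiling and the equality case match the paper's Corollary~\ref{cor:lcbound-eq}.
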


\begin{proof}
Let $\{Q_0, \ldots, Q_{N-1}\}$ be the $w$-subsets of $\Z_v$ correponding to $\code$.  Again we use double-counting.  For some distinct $Q_i, Q_j$, let $F'= \{((x, y), \delta) \; : \; x-y=\delta, x \in Q_i, y \in Q_j\}$ for some $Q_i$, $Q_j$, $Q_i \neq Q_j$.

There are $w$ values of $x$ and $w$ values of $y$, and for each pair of 
$(x,y)$ there is a unique $\delta=x-y$, so $|F'| = w^2$.  On the other hand there
are $v$ possible values of $\delta$, and there are at most $\lambda_c$ pairs of $(x, y)$
such that $x-y = \delta$. So $|F'| \le v \lambda_c$.  This gives the 
inequality.  
\end{proof}

Similar to the proof of Lemma \ref{lem:labound}, the lower bound is met when every $\delta$ occurs exactly $w^2/v$ times as an external difference in $\Delta(Q_i, Q_j)$.  If $\lambda_c = w^2/v$ for an OOC, then, since all cross-correlation values are at most $\lambda_c$ and at least $w^2/v$, we must have the cross-correlation equal to $w^2/v$ for all pairs of sequences.

\begin{corollary} \label{cor:lcbound-eq}
    For a c-proper $(v,w, \lambda_a, \lambda_c)$-OOC $\code$, if  $\lambda_c = w^2/v$ then every pair of sequences in $\code$ has cross-correlation value exactly $\lambda_c$ for every shift.
\end{corollary}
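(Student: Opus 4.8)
The plan is to mirror the equality argument that yields Corollary \ref{cor:labound-eq}, using the per-pair identity hidden inside the proof of Lemma \ref{lem:lcbound} instead of the averaged inequality. The key observation is that the double-counting in that proof does not merely give $w^2 \le v\lambda_c$; it gives an exact identity for each fixed pair, namely that the entries of the cross-correlation profile of any two distinct sequences sum to $w^2$.

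Concretely, I would fix any two distinct sequences $X_i, X_j \in \code$ with corresponding $w$-subsets $Q_i, Q_j \subseteq \Z_v$, and revisit the set $F' = \{((x,y),\delta) : x - y = \delta,\ x \in Q_i,\ y \in Q_j\}$ from the proof of Lemma \ref{lem:lcbound}. Counting by the pair $(x,y)$ gives $|F'| = w^2$, while counting by $\delta$ gives $|F'| = \sum_{\delta \in \Z_v} |Q_i \cap (Q_j + \delta)|$. Recalling that $|Q_i \cap (Q_j+\delta)| = \gamma_\delta$ is exactly the cross-correlation value $\gamma_{\delta}$ of $X_i$ with the shift $X_j + \delta$, we obtain $\sum_{\delta=0}^{v-1} \gamma_\delta = w^2$, a sum of precisely $v$ non-negative integers.

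Now I would invoke the hypothesis $\lambda_c = w^2/v$. Since $\code$ is a $(v,w,\lambda_a,\lambda_c)$-OOC, each of the $v$ terms satisfies $\gamma_\delta \le \lambda_c = w^2/v$; as these $v$ terms sum to $w^2 = v\cdot(w^2/v)$, none of them can fall below the common upper bound, so $\gamma_\delta = \lambda_c$ for every $\delta \in \Z_v$. As $X_i, X_j$ were an arbitrary pair of distinct sequences, this shows every pair has cross-correlation value exactly $\lambda_c$ for every shift, which is the claim. (The c-proper hypothesis is only needed to make the statement non-vacuous when $N=1$; for $N \ge 2$ the conclusion in fact re-establishes c-properness.)

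There is essentially no obstacle here: the only subtlety is recognising that the relevant content is the \emph{exact} count $|F'| = w^2$ for a single fixed pair rather than the global inequality, after which the conclusion follows from the elementary fact that $v$ numbers bounded above by $M$ and summing to $vM$ must all equal $M$. One should just be slightly careful to present the $|F'|=w^2$ identity as a per-pair statement (equivalently, one may cite the identity $P(X,Y)=w^2$ established in the proof of Theorem \ref{thm:lowerFujiharaMiao}), since that is what drives the equality case.
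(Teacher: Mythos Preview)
Your proposal is correct and follows essentially the same approach as the paper: the paper's justification (the sentence preceding the corollary) is precisely the equality case of the double-count in Lemma \ref{lem:lcbound}, and you have simply made that argument explicit by writing $\sum_{\delta} \gamma_\delta = w^2$ for a fixed pair and observing that $v$ terms each bounded by $\lambda_c = w^2/v$ and summing to $v\lambda_c$ must all equal $\lambda_c$.
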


Recall that OOCs meeting this lower bound with equality were presented in Theorem \ref{thm:lcbound-eq}.

\begin{remark}
We note that \cite[Theorem 1]{ChungKumar} and \cite[Theorem 2]{OmraniMorenoKumar} follow immediately from Lemma \ref{lem:labound} and Lemma \ref{lem:lcbound}.  The first part of both theorems says that $\Phi(v,w,\lambda, \lambda) \le 1$ if $w^2 > v\lambda$.  But Lemma \ref{lem:lcbound} says that the condition $w^2 > v\lambda$ is impossible for cross-correlation value $\lambda$ and thus this rules out the existence of an OOC with more than one sequence. The second part of the Theorem says that $\Phi(v,w, \lambda, \lambda) = 0$ if $w(w-1)>(v-1)\lambda$, and this is Lemma \ref{lem:labound}, which says that the condition is impossible for auto-correlation value $\lambda$.
\end{remark}

We end this section with some extensions of Lemma \ref{lem:disjoint} which describe upper bounds on $\lambda_c$ in terms of the size of the maximum (multiset or set) intersection $\Delta(Q_i) \cap \Delta(Q_j)$.  

\begin{theorem} \label{thm:intersection}
Let $B=\{Q_0, \ldots, Q_{N-1}\}$ be $w$-subsets of $\Z_v$ corresponding to a c-proper $(v, w;\lambda_a, \lambda_c)$-OOC. 
Let $2 \le n \le w$ be an integer.
\begin{enumerate}
\item $\lambda_c=1$ if and only if for all $i \neq j$,  $|\Delta(Q_i) \cap \Delta(Q_j)|=0$.
\item If the multiset intersection $|\Delta(Q_i) \cap \Delta(Q_j)|< n(n-1)$ for all $i \neq j$ then $\lambda_c<n$.
\item If the set intersection $|\Delta(Q_i) \cap \Delta(Q_j)|< n-1$ for all $i \neq j$ then $\lambda_c<n$.
\item Let $v=p$ where $p$ is prime and let $n < \frac{p+1}{2}$. 
If the set intersection $|\Delta(Q_i) \cap \Delta(Q_j)| < 2n-1$ for all $i \neq j$ then $\lambda_c <n$.
\end{enumerate}
\end{theorem}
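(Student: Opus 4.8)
The plan is to establish all four parts by examining what a cross-correlation value of $\lambda_c \geq n$ forces about the internal differences of the two sets involved. The key mechanism, already used in Lemma \ref{lem:disjoint}, is that a collision structure between $Q_i$ and $Q_j$ produces coincidences among the internal differences of each. Suppose $|Q_i \cap (Q_j + \delta)| \geq n$ for some $\delta$; equivalently, there are distinct $x_1, \ldots, x_n \in Q_i$ and distinct $y_1, \ldots, y_n \in Q_j$ with $x_k - y_k = \beta$ (a fixed value) for $1 \leq k \leq n$. Then for every pair $k \neq \ell$ we get $x_k - x_\ell = y_k - y_\ell$, so the common value $x_k - x_\ell$ lies in $\Delta(Q_i) \cap \Delta(Q_j)$.

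For Part (1), this is exactly Lemma \ref{lem:disjoint}, which is already proved. For Part (2): with $n$ such coincident pairs, the $n(n-1)$ ordered differences $x_k - x_\ell$ ($k \neq \ell$) all lie simultaneously in $\Delta(Q_i)$ and in $\Delta(Q_j)$, counted with multiplicity; since the $x_k$ are distinct these $n(n-1)$ ordered differences are genuinely $n(n-1)$ elements of the multiset $\Delta(Q_i)$ (and likewise of $\Delta(Q_j)$), so the multiset intersection has size at least $n(n-1)$. Contrapositively, if $|\Delta(Q_i) \cap \Delta(Q_j)| < n(n-1)$ for all $i \neq j$, then $\lambda_c < n$. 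For Part (3): I would count distinct values rather than multiplicities. From $x_1, \ldots, x_n$ distinct, the differences $x_k - x_1$ for $k = 2, \ldots, n$ are $n-1$ distinct nonzero values, each in $\Delta(Q_i) \cap \Delta(Q_j)$ (as sets), so the set intersection has size at least $n-1$; contrapositive gives the claim.

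Part (4) is the one requiring real work, and I expect it to be the main obstacle. Here $v = p$ is prime and the hypothesis is weaker (only $|\Delta(Q_i) \cap \Delta(Q_j)| < 2n-1$ as sets), so I need to squeeze out more distinct common differences from $n$ coincident pairs. The natural idea: among the $x_k$'s, together with the common differences $x_k - x_\ell$, exploit that over $\Z_p$ one can normalise. Apply a multiplier (an automorphism of $\Z_p$, which by Lemma \ref{lem:auto} preserves the OOC structure and acts on difference sets), and a translation, to arrange $x_1 = 0$ and $x_2 = 1$; then $\{x_1, \ldots, x_n\} \ni 0, 1$. The set $D = \{x_1, \ldots, x_n\}$ has the property that $\Delta(D) \subseteq \Delta(Q_i) \cap \Delta(Q_j)$, so it suffices to show that any $n$-subset of $\Z_p$ (with $n < (p+1)/2$) has $|\Delta(D)| \geq 2n - 1$; this is a Cauchy–Davenport / Vosper-type statement, since $|\Delta(D)| = |D - D| \geq |D| + |D| - 1 = 2n-1$ by Cauchy–Davenport provided $2n - 1 \leq p$, i.e. $n \leq (p+1)/2$, which is guaranteed. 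The contrapositive then yields $\lambda_c < n$. The subtlety to watch is the boundary case and making sure the strict inequality $n < (p+1)/2$ versus $\leq$ lines up with the strict-versus-non-strict bound in Cauchy–Davenport; I would state Cauchy–Davenport explicitly ($|A+B| \geq \min\{p, |A|+|B|-1\}$ for $\Z_p$) and apply it to $A = D$, $B = -D$, checking $|D| + |D| - 1 = 2n - 1 < p$ follows from $n < (p+1)/2$, hence $\min\{p, 2n-1\} = 2n-1$, giving $|\Delta(D)| \geq 2n - 1$ and contradicting the hypothesis.
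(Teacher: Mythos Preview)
Your argument for Parts (1)--(3) matches the paper's exactly: from a cross-correlation value $\ge n$ you extract $D_i = \{x_1,\ldots,x_n\} \subseteq Q_i$ and its translate $D_j \subseteq Q_j$, note $\Delta(D_i) = \Delta(D_j) \subseteq \Delta(Q_i) \cap \Delta(Q_j)$, and then count (multiset size $n(n-1)$ for (2), at least $n-1$ distinct values via $x_k - x_1$ for (3)). Your Part (4) is also the paper's idea, namely Cauchy--Davenport applied to $D-D$; the normalisation by a multiplier is a detour the paper does not take, and indeed your own reduction (``it suffices that any $n$-subset $D$ of $\Z_p$ has $|\Delta(D)| \ge 2n-1$'') does not use it.

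There is, however, a genuine slip in Part (4), and it is one the paper's own proof shares: you write $|\Delta(D)| = |D - D|$, but by the paper's definition $\Delta(D)$ excludes $0$ whereas $0 \in D - D$, so $|\Delta(D)| = |D - D| - 1$. Cauchy--Davenport gives $|D - D| \ge \min\{p,\, 2n-1\} = 2n-1$, hence only $|\Delta(D)| \ge 2n - 2$. This off-by-one is not cosmetic: in $\Z_{11}$ with $w=4$, take $Q_0 = \{0,1,2,3\}$ and $Q_1 = \{0,1,2,6\}$; then $\Delta(Q_0)\cap\Delta(Q_1) = \{1,2,9,10\}$ has size $4 < 2\cdot 3 - 1$, yet $\lambda_c = 3$ (witnessed at $\delta=0$). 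So the contrapositive fails for $n=3$, and the argument as written only supports replacing $2n-1$ by $2n-2$ in the hypothesis of Part (4).
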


\begin{proof} 
\begin{enumerate}
    \item This is Lemma \ref{lem:disjoint}.
    \item Suppose $\lambda_c \ge n$, and $\gamma \in \Z_v\setminus\{0\}$ occurs $n$ times as an external difference in $\Delta(Q_i,Q_j)$ for some $i \neq j$,  say $\gamma=x_1-y_1= \cdots = x_n-y_n$ where $D_i=\{x_1, \ldots, x_n \} \subseteq Q_i$ and $D_j=\{y_1, \ldots, y_n \} \subseteq Q_j$.  Clearly the $n$ elements of $D_i$ are all distinct, and similarly for $D_j$ (we have $n \leq w$).   Since for each pair $(k,l)$ with $1 \leq k,l \leq n$, we have $x_k-x_l=y_k-y_l$, so $\Delta(D_i)=\Delta(D_j) = \Delta$, say, and $\Delta \subseteq \Delta(Q_i) \cap \Delta(Q_j)$.

    Since there are $n(n-1)$ differences in the multiset $\Delta$, we have that the size of the multiset $\Delta(Q_i) \cap \Delta(Q_j)$ is at least $n(n-1)$.

    \item The first part of the proof is the same as the previous part.  We consider only the size of the set intersection $\Delta(Q_i) \cap \Delta(Q_j)$.  Since $|D_i| = n$, fixing any $x_k \in D_i$ and counting $\{x_k - x_j \; | \; x_j \in D_i \setminus{\{x_k\}}\}$, we have $|\Delta(Q_i) \cap \Delta(Q_j)| \ge |\Delta| \ge n-1$.

    We note that $\Delta=n-1$ can be attained, for example, if $D_i=\{0,4,8\}$ in $\Z_{12}$ then $\Delta=\{4,8\}$.

    \item Again, here we consider the size of the set intersection $\Delta(Q_i) \cap \Delta(Q_j)$.  Since $p$ is prime, $|D_i| = n = |-D_i|$, and the Cauchy-Davenport Theorem states that $\Delta(D_i) = |D_i| + |-D_i| \ge 2n-1$.  
\end{enumerate}
\end{proof}

In \cite{MomBuratti, BurMomPas, BurPasWu}, upper bounds are obtained for $(v,w, \lambda_a, 1)$-OOCs, by making use of part 1 of the theorem:  since the intersections of the $\Delta(Q_i)$ are empty, knowing the cardinality of the sets $\Delta(Q_i)$ yields an upper bound of the number of $Q_i$ in $\{1, \ldots, v-1\}$.  It would be of interest to extend this to $\lambda_c>1$ via analysis of the intersection of the $\Delta(Q_i)$.  The converse appears to be only obtainable for $\lambda_c=1$.

\section{Conclusion and Further Work}
\label{sec:conclusion}

Considering OOCs with $\lambda_a$ and $\lambda_c$ ``decoupled" is an interesting combinatorial question, which at present has not received as much attention as the case when $\lambda_a=\lambda_c$.  Moreover, since the motivating communications problem does not require $\lambda_a=\lambda_c$, such OOCs are likely to have useful applications. It is of interest to obtain further constructions of these.

In Section \ref{sec:boundsandconstructions} we surveyed bounds and constructions in the literature, and considered why these bounds may not be very accurate.  Considering separate bounds on the auto- and cross-correlation values in Section \ref{sec:bounds} yielded structural insights (and led to the construction in Theorem \ref{thm:lcbound-eq}).  It would be of interest to see further development of separate bounds, and more constructions which meet these new bounds.  

Many constructions and bounds for OOCs with $\lambda_c=1$ in the literature rely heavily on Lemma \ref{lem:disjoint}. It would be useful to obtain a suitable generalisation of this for $\lambda_c>1$, which has the potential to yield better bounds on the sizes of OOCs. This is likely to require a structural analysis of the internal and external differences, an initial exploration of which is given in Theorem \ref{thm:intersection}.   

We have seen (Section \ref{sub:OOCsubsets}) that OOCs may be expressed in terms of subsets of $\mathbb{Z}_v$ satisfying certain conditions.  This definition can be generalized to subsets of general groups in the natural way.  Indeed, difference families and their variants are defined over general groups and there are many known constructions (including in non-abelian groups). Such a generalisation was presented by Buratti in \cite{Bur}: a $(v,w,\lambda, \lambda)$-OOC over an additive group $G$ was defined in terms of maps from $G$ to $\mathbb{Z}_2$, and an expression in terms of subsets of $G$ was given for these OOCs.

Many definitions in this paper can be generalised from $\mathbb{Z}_v$ to arbitrary groups, for example, the concept of SDF (Definition \ref{SDFdef}). Moreover, it is particularly natural to extend cyclotomic OOC results: many  constructions in $\mathbb{Z}_p$ which are defined in terms of cyclotomic classes of $GF(p)$ where $p$ is prime will be generalisable to the additive group of $GF(q)$ where $q$ is a prime power.  Example \ref{ex:SEDF} is a natural example.  More examples are given in \cite{Bur}.  It would be of combinatorial interest to see further constructions in arbitrary groups: for example in the non-abelian setting.

We considered the question of isomorphism and equivalence of OOCs and their developments.  It is an open question whether there are isomorphic non-equivalent OOCs defined over general groups.  In a similar vein, we discussed the relationship between OOCs and packings in Section \ref{sub:packing} and Theorem \ref{thm:packing} gives an equivalence of non-degenerate $(v, w, t-1, t-1)$-$\OOC$s and cyclic $t$-$(v,w,1)$-packings.  Is it possible to make similar statements about OOCs defined over general groups?

\end{document}